\newcommand{\8}{\infty}
\newcommand{\C}{\mathbb{C}}
\newcommand{\N}{\mathbb{N}}
\newcommand{\T}{\mathbb{T}}
\newcommand{\D}{\mathbb{D}}
\newcommand{\spa}{\mathrm{span}}
\newcommand{\Ker}{\mathrm{Ker}}
\newcommand{\supp}{\mathrm{supp}}
\newcommand{\Co}{\mathcal{C}}
\newcommand{\Fo}{\mathcal{F}}
\newcommand{\Lo}{\mathcal{L}}
\newcommand{\Ho}{\mathcal{H}}
\newcommand{\Int}{\mathrm{int}}
\newcommand{\1}{\mathds{1}}
\newcounter{erz}[section] \numberwithin{erz}{section}
\newtheorem{theorem}[erz]{Theorem}
\newtheorem{lemma}[erz]{Lemma}
\newtheorem{proposition}[erz]{Proposition}
\newtheorem{corollary}[erz]{Corollary}
\newtheorem{question}[erz]{Question}
\theoremstyle{remark}
\newtheorem{remark}[erz]{Remark}
\newtheorem{example}[erz]{Example}
\begin{document}
\title{Maximum Modulus Principle For Multipliers and Mean Ergodic Multiplication Operators}
\author{Eugene Bilokopytov\footnote{Email address bilokopi@myumanitoba.ca, erz888@gmail.com.}}
\maketitle

\begin{abstract}
The main goal of this note is to show that (not necessarily holomorphic) multipliers of a wide class of normed spaces of continuous functions over a connected Hausdorff topological space cannot attain their multiplier norms, unless they are constants. As an application, a contractive multiplication operator is either a multiplication with a constant, or is completely non-unitary. Additionally, we explore possibilities for a multiplication operator to be (weakly) compact and (uniformly) mean ergodic.

\emph{Keywords:} Function Spaces, Multiplication Operators, Mean Ergodic Operators;

MSC2020 46B20, 47A35, 47B38
\end{abstract}

\section{Introduction}

Normed spaces of functions are ubiquitous in mathematics, especially in analysis. These spaces can be of a various nature and exhibit different types of behavior, and in this paper we discuss some questions related to these spaces from a general, axiomatic viewpoint. One of the most prominent classes of linear operators on the spaces of functions is the class of multiplication operators (MO). In this article we continue our investigation (see \cite{erz,erz2}) of the general framework which allows to consider any Banach space that consists of continuous (scalar-valued) functions, such that the point evaluations are continuous linear functionals, and of MO's on these spaces.\medskip

First, let us define precisely what we mean by a normed space of continuous functions. Let $X$ be a topological space (a \emph{phase space}) and let $\Co\left(X\right)$ denote the space of all continuous complex-valued functions over $X$ endowed with the compact-open topology. A \emph{normed space of continuous functions} (NSCF) over $X$ is a linear subspace $\mathbf{F}\subset\Co\left(X\right)$ equipped with a norm that induces a topology, which is stronger than the compact-open topology, i.e. the inclusion operator $J_{\mathbf{F}}:\mathbf{F}\to\Co\left(X\right)$ is continuous, or equivalently the unit ball $B_{\mathbf{F}}$ is bounded in $\Co\left(X\right)$. If $\mathbf{F}$ is a linear subspace of $\Co\left(X\right)$, then the \emph{point evaluation} at $x\in X$ on $\mathbf{F}$ is the linear functional $x_{\mathbf{F}}:\mathbf{F}\to\C$, defined by $x_{\mathbf{F}}\left(f\right)=f\left(x\right)$. If $\mathbf{F}$ is a NSCF, then all point evaluations are bounded on $\mathbf{F}$. Conversely, if $\mathbf{F}\subset\Co\left(X\right)$ is equipped with a complete norm such that $x_{\mathbf{F}}\in \mathbf{F}^{*}$, for every $x\in X$, then $\mathbf{F}$ is a NSCF. We will call a NSCF $\mathbf{F}$ over $X$ \emph{(weakly) compactly embedded} if $J_{\mathbf{F}}$ is a (weakly) compact operator, or equivalently, if $B_{\mathbf{F}}$ is (weakly) relatively compact in $\Co\left(X\right)$. Clearly, every compactly embedded NSCF's is weakly compactly embedded. On the other hand, any reflexive NSCF is also weakly compactly embedded. By a Banach space of continuous functions (BSCF) we mean a complete NSCF.\medskip

A \emph{multiplication operator} (MO) with \emph{symbol} $\omega:X\to\C $ is a linear map $M_{\omega}$ on the space $\Fo\left(X\right)$ of all complex-valued functions on $X$ defined by $$\left[M_{\omega}f\right]\left(x\right)=\omega\left(x\right)f\left(x\right),$$ for $x\in X$. Let $\mathbf{F}$ and $\mathbf{E}$ be NSCF's over $X$. If $M_{\omega}\mathbf{F}\subset\mathbf{E} $, then we say that $M_{\omega}$ is a multiplication operator from $\mathbf{F}$ into $\mathbf{E}$ (we use the same notation $M_{\omega}$ for what is in fact $\left.M_{\omega}\right|_{\mathbf{F}}$). If in this case $\mathbf{F}=\mathbf{E}$, then we will call $\omega$ a \emph{multiplier} of $\mathbf{F}$. If both $\mathbf{F}$ and $\mathbf{E}$ are BSCF's, then any MO between these spaces is automatically continuous due to Closed Graph theorem. However, in concrete cases it can be very difficult to determine all MO's between a given pair of NSCF's, and in particular to characterize all multipliers of a NSCF (see e.g. \cite{ggm} and \cite{vukotic}, where the multiplier algebras of some specific families of NSCF's are described).\medskip

In this article we explore how certain properties of MO's are reflected on their symbols. In \cite{erz2} we focused on MO's which are surjective isometries and isometries in general. In particular, we proved that on a wide class of NSCF's every isometric MO with a non-constant symbol is completely non-unitary, i.e. has no unitary restrictions. The methods employed there were of Banach space geometry. In the present article we attack the same problem using a topological approach. This allows to extend the mentioned result to contractions, and also somewhat enlarge the class of admissible NSCF's (see Corollary \ref{cnu}). In the process we discover a result reminiscent of the Maximum Modulus Principle from the Complex Analysis (see theorems \ref{mp} and \ref{either}).

In addition to contractive MO's we also consider the MO's which are (weakly) compact and (uniformly) mean ergodic. In particular, we show that a MO can be compact only in degenerate cases (see Proposition \ref{moc}) and show that a NSCF that has ``a lot'' of weakly compact MO's has to be weakly compactly embedded itself (Proposition \ref{mow}). We also abstract some of the results from \cite{bjr} and \cite{br} about mean ergodic MO's (see Theorem \ref{mem}, Proposition \ref{un} and Corollary \ref{unc}). This involves proving one more version of ``Maximum Modulus Principle'' for multipliers. We conclude with an alternative proof of the result from \cite{bjr} saying that a MO on a weighted Banach space of continuous functions is mean ergodic if and only if it is uniformly mean ergodic.\medskip

Throughout the paper by $Id_{X}$ we mean the identity map on a set $X$, while the supremum norm of $f:X\to\C$ is denoted by $\|f\|_{\8}$.\medskip

Before concluding this section with some concrete examples of NSCF's, let us mention a large class of compactly embedded NSCF's. If $X$ is a domain in $\C^{n}$, i.e. an open connected set, and $\mathbf{F}$ is a NSCF over $X$ that consists of holomorphic functions, then $\mathbf{F}$ is compactly embedded. Indeed, by Montel's theorem (see \cite[Theorem 1.4.31]{scheidemann}), $B_{\mathbf{F}}$ is relatively compact in $\Co\left(X\right)$, since it is a bounded set that consists of holomorphic functions.

\begin{example}
The Hardy space $\mathbf{H}$ is the BSCF over the (open) unit disk $\D\subset\C$ that consists of holomorphic functions $f$ with the norm defined by $\|f\|^{2}=\sum\limits_{n=0}^{\8}\left|a_{n}\right|^{2}$, where $\left\{a_{n}\right\}_{n=0}^{\8}$ are the Taylor coefficients of $f$. One can show that this is a Hilbert space with monomials forming an orthonormal basis. The Hardy space is among the most studied function spaces, and we refer to e.g. \cite{koosis} for more information.
\qed\end{example}\smallskip

\begin{example}
Assume that $X$ is a Tychonoff space and let $u:X\to\left(0,+\8\right)$ be upper semi-continuous. Define the \emph{weighted space of continuous functions} \linebreak $\Co_{u}^{\8}=\left\{f\in\Co\left(X\right),~\|f\|_{u}^{\8}=\|uf\|_{\8}<+\8\right\}$. One can show that this is a BSCF over $X$ with respect to the norm $\|\cdot\|_{u}^{\8}$. Moreover, $\|x_{\Co_{u}^{\8}}\|=\frac{1}{u\left(x\right)}$, for every $x\in X$. Indeed, it is immediate that $\left|f\left(x\right)\right|\le \frac{1}{u\left(x\right)}$, for every $f\in B_{\Co_{u}^{\8}}$, while since $\frac{1}{u}$ is lower semi-continuous and positive-valued, for any $\varepsilon>0$ there is a continuous $f:X\to\left(0,+\8\right)$ such that $f\le \frac{1}{u}$, while $f\left(x\right)\ge \frac{1}{u\left(x\right)}-\varepsilon$ (see \cite[IX.1.6, Proposition 5]{bourb}). One can also show that $\Co_{u}^{\8}$ is not weakly compactly embedded (the proof from [Example 2.6]\cite{erz2} carries over to the case when $u$ is non-constant). In the case $u\equiv 1$ we use the notation $\Co_{\8}\left(X\right)$. Additionally, $\Co_{u}^{0}$ consists of $f\in\Co_{u}^{\8}$ such $\left|uf\right|$ vanish at infinity.
\qed\end{example}\smallskip

\begin{example}
Assume that $X$ is a domain in $\C^{n}$ and let $u:X\to\left(0,+\8\right)$ be upper semi-continuous. Define the \emph{weighted space of holomorphic functions}  $\Ho_{u}^{\8}=\Co_{u}^{\8}\cap\Ho\left(X\right)$, where $\Ho\left(X\right)$ is the subspace of $\Co\left(X\right)$ that consists of holomorphic functions. As was mentioned before, this space is compactly embedded, and is a closed subspace of $\Co_{u}^{\8}$. Note however, that the equality $\|x_{\Co_{u}^{\8}}\|=\frac{1}{u\left(x\right)}$ may not hold (see \cite{bbt}).  In the case $u\equiv 1$ we use the notation $\Ho_{\8}\left(X\right)$. Additionally, $\Ho_{u}^{0}=\Ho_{u}^{\8}\cap\Co_{u}^{0}$.
\qed\end{example}\smallskip

\begin{example}\label{ad}
Assume that $X$ is a bounded domain in $\C^{n}$. Then $A\left(X\right)$ is the closed subalgebra of $\Ho_{\8}\left(X\right)$ which consists of functions that admit a continuous extension on $\overline{X}$. Another natural way to represent this space is $A\left(\overline{X}\right)$ which is the closed subalgebra of $\Co_{\8}\left(\overline{X}\right)$ that consists of functions holomorphic on $X$. Then $A\left(X\right)$ is a BSCF over $X$, and $A\left(\overline{X}\right)$ is a BSCF over $\overline{X}$. Both contain constant functions. Note that there can be no ambiguity in our notations since $X$ is open and $\overline{X}$ is compact. Also, note that MO's are same on these BSCF's, despite being induced by functions on $X$ and $\overline{X}$ respectively.\qed\end{example}

\section{General Properties of Multiplication Operators}

In this section we consider some basic properties of MO's, including the Maximal Modulus Principle for multipliers. Everywhere in this section $X$ is a Hausdorff space. Note that $M_{\omega}$ is continuous on $\Fo\left(X\right)$, for every $\omega:X\to\C$, and is continuous on $\Co\left(X\right)$ whenever $\omega$ is continuous. We will however mostly deal with MO's on NSCF's. These can be characterized by the following well-known fact (see e.g. \cite[Proposition 2.4 and Corollary 2.5]{erz}).

\begin{proposition}\label{rec}
Let $\mathbf{F}$ and $\mathbf{E}$ be NSCF's over $X$ and let $T\in\Lo\left(\mathbf{F},\mathbf{E}\right)$. Then $T=M_{\omega}$, for $\omega:X\to\C $, if and only if $T^{*}x_{\mathbf{E}}=\omega\left(x\right)x_{\mathbf{F}}$, for every $x\in X$. In other words, $T$ is a MO if and only if $T^{*}x_{\mathbf{E}}\subset\C x_{\mathbf{F}}$, for every $x\in X$.
\end{proposition}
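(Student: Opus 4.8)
The plan is to unwind the definitions of the adjoint and of the multiplication operator, using the point evaluations as the bridge between the two spaces. Since $\mathbf{E}$ is a NSCF we have $x_{\mathbf{E}}\in\mathbf{E}^{*}$, so $T^{*}x_{\mathbf{E}}$ is a well-defined element of $\mathbf{F}^{*}$, and the identity that drives the whole argument is
\[
\left(Tf\right)\left(x\right)=x_{\mathbf{E}}\left(Tf\right)=\left(T^{*}x_{\mathbf{E}}\right)\left(f\right),
\]
valid for every $f\in\mathbf{F}$ and every $x\in X$.

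First I would prove the forward implication. Assuming $T=M_{\omega}$, for every $f\in\mathbf{F}$ and $x\in X$ the displayed identity gives $\left(T^{*}x_{\mathbf{E}}\right)\left(f\right)=\left(M_{\omega}f\right)\left(x\right)=\omega\left(x\right)f\left(x\right)=\omega\left(x\right)x_{\mathbf{F}}\left(f\right)$; as $f$ is arbitrary, this says precisely $T^{*}x_{\mathbf{E}}=\omega\left(x\right)x_{\mathbf{F}}$. Conversely, suppose $T^{*}x_{\mathbf{E}}=\omega\left(x\right)x_{\mathbf{F}}$ for all $x\in X$ and some function $\omega:X\to\C$. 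Then for every $f\in\mathbf{F}$ and $x\in X$ the same identity yields $\left(Tf\right)\left(x\right)=\omega\left(x\right)x_{\mathbf{F}}\left(f\right)=\omega\left(x\right)f\left(x\right)=\left(M_{\omega}f\right)\left(x\right)$, so $Tf=M_{\omega}f$ pointwise, hence $T=M_{\omega}$. This settles the first assertion.

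For the ``in other words'' reformulation, the only additional point is to produce the function $\omega$ from the inclusions $T^{*}x_{\mathbf{E}}\in\C x_{\mathbf{F}}$. At points $x$ with $x_{\mathbf{F}}\neq 0$ there is a unique scalar $\omega\left(x\right)$ with $T^{*}x_{\mathbf{E}}=\omega\left(x\right)x_{\mathbf{F}}$; at points $x$ with $x_{\mathbf{F}}=0$ the inclusion forces $T^{*}x_{\mathbf{E}}=0$, and we are free to set $\omega\left(x\right)=0$. With this $\omega:X\to\C$ the equality $T^{*}x_{\mathbf{E}}=\omega\left(x\right)x_{\mathbf{F}}$ holds at every $x$, so the first assertion applies and $T=M_{\omega}$.

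I do not expect a genuine obstacle: the argument is a direct duality computation. The only place that requires a moment's care is this degenerate situation where a point evaluation $x_{\mathbf{F}}$ is the zero functional, and it is handled by the freedom in choosing $\omega\left(x\right)$ there.
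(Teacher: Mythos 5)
Your argument is correct and is precisely the standard duality computation that the paper itself omits, citing the result as well known from \cite{erz}. The key identity $\left(Tf\right)\left(x\right)=\left(T^{*}x_{\mathbf{E}}\right)\left(f\right)$ and the careful handling of points where $x_{\mathbf{F}}=0_{\mathbf{F}^{*}}$ are exactly what is needed, so nothing is missing.
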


If in particular $\mathbf{F}=\mathbf{E}$, then $T$ is a MO if and only if $x_{\mathbf{F}}$ is an eigenvector of $T^{*}$ (or else $x_{\mathbf{F}}=0_{\mathbf{F}^{*}}$), for every $x\in X$. Then the multiplier is the correspondence between $x$ and the eigenvalue of $T^{*}$ for $x_{\mathbf{F}}$.

For $Y\subset X$ define $\mathbf{F}_{Y}=\left\{f\in \mathbf{F}\left|\supp f\subset Y\right.\right\}=\left\{x_{\mathbf{F}}, x\in X\backslash Y\right\}^{\perp}$, which is a closed subspace of $\mathbf{F}$, and is also a NSCF over $X$. It follows that $\Ker M_{\omega}=\mathbf{F}_{ \omega^{-1} \left(0\right) }$, $\overline{M_{\omega}\mathbf{F}}\subset\mathbf{F}_{X\backslash \omega^{-1} \left(0\right) }$ and $M_{\omega}\mathbf{F}_{Y}\subset\mathbf{F}_{Y}$, for any $Y\subset X$.\medskip

Note that in general we cannot reconstruct the symbol of a MO from its data as a linear operator between certain NSCF's, in the sense that the equality of MO's does not imply the equality of their symbols. In order to prevent this pathology from happening we have to introduce the following concept. We will call a NSCF $\mathbf{F}$ over $X$ $1$-\emph{independent} if for every $x\in X$ we have $x_{\mathbf{F}}\ne 0_{\mathbf{F}^{*}}$, i.e. there is $f\in \mathbf{F}$ such that $f\left(x\right)\ne 0$. It is easy to see that a MO from a $1$-independent NSCF determines its symbol. Moreover, some properties of this symbol can also be recovered.

\begin{proposition}\label{hc} If $\mathbf{F}$ is a $1$-independent NSCF over $X$, then its multipliers are continuous. Moreover, $\|\omega\|_{\8}\le\|M_{\omega}\|$, for every multiplier $\omega$ of $\mathbf{F}$.
\end{proposition}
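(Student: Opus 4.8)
The plan is to treat the two claims separately, each being a short consequence of $1$-independence.

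First I would establish that $\omega$ is continuous. Fix $x_0 \in X$; by $1$-independence choose $f \in \mathbf{F}$ with $f(x_0) \neq 0$. Since $\omega$ is a multiplier, $\omega f = M_\omega f$ lies in $\mathbf{F} \subset \Co(X)$ and is therefore continuous; in particular it is continuous on the open neighbourhood $U = \{x \in X : f(x) \neq 0\}$ of $x_0$, on which $\omega = (\omega f)/f$ is a quotient of continuous functions with non-vanishing denominator, hence continuous. As $x_0$ was arbitrary, $\omega \in \Co(X)$.

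Next I would prove $\|\omega\|_{\8} \le \|M_\omega\|$. If $M_\omega$ is not bounded there is nothing to prove, so assume $\|M_\omega\| < \8$. For each $x \in X$, Proposition \ref{rec} gives $M_\omega^{*} x_{\mathbf{F}} = \omega(x)\, x_{\mathbf{F}}$ in $\mathbf{F}^{*}$; since $\mathbf{F}$ is a NSCF and is $1$-independent, $0 < \|x_{\mathbf{F}}\| < \8$, so taking norms in
$$|\omega(x)|\,\|x_{\mathbf{F}}\| = \|M_\omega^{*} x_{\mathbf{F}}\| \le \|M_\omega^{*}\|\,\|x_{\mathbf{F}}\| = \|M_\omega\|\,\|x_{\mathbf{F}}\|$$
and cancelling $\|x_{\mathbf{F}}\|$ yields $|\omega(x)| \le \|M_\omega\|$; a supremum over $x \in X$ finishes it.

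I do not anticipate a real obstacle here: the role of $1$-independence is precisely to rule out the degenerate case $x_{\mathbf{F}} = 0_{\mathbf{F}^{*}}$, which is exactly what legitimises dividing by $f(x_0)$ in the first part and by $\|x_{\mathbf{F}}\|$ in the second. If one wished to avoid adjoints altogether, the norm bound also follows directly: for $f \in \mathbf{F}$ with $f(x) \neq 0$ one has $|\omega(x)| = |(M_\omega f)(x)|/|f(x)| \le \|M_\omega\|\,\|x_{\mathbf{F}}\|\,\|f\|/|f(x)|$, and taking the infimum over such $f$, together with $\inf\{\|f\|/|f(x)| : f \in \mathbf{F},\ f(x) \neq 0\} = \|x_{\mathbf{F}}\|^{-1}$, gives the same conclusion.
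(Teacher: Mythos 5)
Your proof is correct and follows essentially the same route as the paper: continuity via writing $\omega=(M_\omega f)/f$ near a point where $f$ does not vanish, and the norm bound by taking norms in $M_\omega^{*}x_{\mathbf{F}}=\omega(x)x_{\mathbf{F}}$ and cancelling $\|x_{\mathbf{F}}\|>0$. The adjoint-free variant you sketch is a valid alternative but adds nothing essential.
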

\begin{proof}
Let $x\in X$. Since $\mathbf{F}$ is $1$-independent, there is $f\in \mathbf{F}$ such that $f\left(x\right)\ne 0$. As $f$ and $M_{\omega}f$ are continuous, $\omega$ is continuous at $x$ as a ratio of continuous functions. Moreover, if $M_{\omega}\in\Lo\left(\mathbf{F}\right)$, then $M_{\omega}^{*}x_{\mathbf{F}}=\omega\left(x\right)x_{\mathbf{F}}$ implies that $\left|\omega\left(x\right)\right|\le\|M_{\omega}^{*}\|=\|M_{\omega}\|$. Since $x$ was chosen arbitrarily, the result follows.
\end{proof}

Let us now proceed to the main result of the paper. We need the following lemma.

\begin{lemma}\label{fp}
Let $\omega:X\to\C$ be such that for every $x\in \omega^{-1}\left(1\right)$ there is $f\in \Co\left(X\right)$ with $f\left(x\right)\ne 0$ and $\omega f = f$. Then $\omega^{-1}\left(1\right)$ is open.
\end{lemma}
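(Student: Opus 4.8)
The plan is to show that every point $x_0 \in \omega^{-1}(1)$ has a neighborhood contained in $\omega^{-1}(1)$. Fix such an $x_0$, and by hypothesis pick $f \in \Co(X)$ with $f(x_0) \neq 0$ and $\omega f = f$. The key observation is that the equation $\omega f = f$ rewrites pointwise as $(\omega(x) - 1) f(x) = 0$ for all $x \in X$; hence on the open set $U := \{x \in X : f(x) \neq 0\}$, which contains $x_0$ by continuity of $f$, we must have $\omega(x) = 1$. Thus $x_0 \in U \subset \omega^{-1}(1)$, and $U$ is open because $f$ is continuous. Since $x_0$ was arbitrary, $\omega^{-1}(1)$ is open.

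I expect there to be essentially no obstacle here: the statement is a soft consequence of the pointwise identity and the continuity of the witnessing function $f$, and no topological hypotheses beyond what is implicit are needed. The one thing to double-check is that we only use that $X$ is a topological space (Hausdorffness, assumed throughout the section, is not actually required for this lemma) and that $\omega$ itself need not be assumed continuous — the conclusion follows purely from the local factorization through $f$. It is worth noting in the write-up that the role of this lemma in what follows is presumably to be paired with a connectedness argument: combined with a complementary statement that $\omega^{-1}(1)$ is also closed (or with a similar argument applied to the superlevel structure of $|\omega|$), openness will force $\omega^{-1}(1)$ to be all of $X$ on connected phase spaces, which is the mechanism behind the Maximum Modulus Principle for multipliers announced as Theorems \ref{mp} and \ref{either}.
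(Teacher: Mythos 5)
Your proof is correct and follows essentially the same argument as the paper: both exploit the pointwise identity $(\omega(y)-1)f(y)=0$ and the continuity of the witnessing $f$ to produce an open neighborhood of each point of $\omega^{-1}(1)$ on which $\omega\equiv 1$. The only cosmetic difference is that you take $U$ to be the full cozero set $\{f\neq 0\}$ while the paper takes any open neighborhood where $f$ does not vanish; your remarks about the role of the lemma and the dispensability of Hausdorffness are also accurate.
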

\begin{proof}
Let $x\in \omega^{-1}\left(1\right)$ and let $f\in \Co\left(X\right)$ be such that $f\left(x\right)\ne 0$ and $\omega f = f$. The latter means that $f\left(y\right)\left(\omega\left(y\right)-1\right)=0$, for every $y\in X$. Since $f\left(x\right)\ne 0$ there is an open subset $U$ of $X$ containing $x$ such that $f\left(y\right)\ne0$, for any $y\in U$. Hence, $\omega\left(y\right)-1=0$, for every $y\in U$, from where $U\subset \omega^{-1}\left(1\right)$, and so $x\in\Int~ \omega^{-1}\left(1\right)$. Since $x$ was chosen arbitrarily, we conclude that $\omega^{-1}\left(1\right)$ is open.
\end{proof}

Recall that a subset $Y\subset X$ is called \emph{clopen} if it simultaneously closed and open. Let $\T=\partial \D$ be the unit circle.

\begin{theorem}\label{mp} Let $\mathbf{F}$ be a $1$-independent NSCF over $X$. If $\omega:X\to\C$ is such that $M_{\omega}$ is an operator of norm $1$ on $\mathbf{F}$, and moreover $M_{\omega}B_{\mathbf{F}}$ is relatively weakly compact in $\Co\left(X\right)$, then $\omega^{-1}\left(\lambda\right)$ is clopen, for every $\lambda\in\T$. Furthermore, if additionally $X$ is connected and there is $x\in X$ such that $\left|\omega\left(x\right)\right|=1$, then $\omega\equiv\lambda$, for some $\lambda\in\T$.
\end{theorem}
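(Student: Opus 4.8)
The plan is to deduce everything from Lemma \ref{fp} applied to the function $\overline{\lambda}\omega$. Fix $\lambda\in\T$; by Proposition \ref{hc} the multiplier $\omega$ is continuous and $\|\omega\|_{\8}\le\|M_{\omega}\|\le 1$, so $\omega^{-1}\left(\lambda\right)=\left(\overline{\lambda}\omega\right)^{-1}\left(1\right)$ is closed. Hence, to obtain that it is clopen it suffices to show that for every $x_{0}\in\omega^{-1}\left(\lambda\right)$ there is $f\in\Co\left(X\right)$ with $f\left(x_{0}\right)\ne 0$ and $\overline{\lambda}\omega f=f$ (equivalently $\omega f=\lambda f$), for then $\left(\overline{\lambda}\omega\right)^{-1}\left(1\right)$ is open by Lemma \ref{fp}. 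Once the first assertion is established, the second is immediate: if $\left|\omega\left(x\right)\right|=1$ then $\lambda:=\omega\left(x\right)\in\T$ and $\omega^{-1}\left(\lambda\right)$ is a nonempty clopen subset of the connected space $X$, hence equals $X$, i.e.\ $\omega\equiv\lambda$.

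To produce the eigenfunction I would run a mean ergodic argument inside $\Co\left(X\right)$. Fix such an $x_{0}$ and, using $1$-independence, pick $f_{0}\in B_{\mathbf{F}}$ with $f_{0}\left(x_{0}\right)\ne 0$. For $k\ge 1$ set $h_{k}:=\overline{\lambda}^{\,k}M_{\omega}^{k}f_{0}=\left(\overline{\lambda}\omega\right)^{k}f_{0}\in\mathbf{F}$. Since $\|M_{\omega}\|\le 1$ we have $\|\overline{\lambda}^{\,k}M_{\omega}^{k-1}f_{0}\|_{\mathbf{F}}\le 1$, so $h_{k}=M_{\omega}\left(\overline{\lambda}^{\,k}M_{\omega}^{k-1}f_{0}\right)\in M_{\omega}B_{\mathbf{F}}$; thus $\left\{h_{k}:k\ge 1\right\}$ lies in the relatively weakly compact set $M_{\omega}B_{\mathbf{F}}\subset\Co\left(X\right)$ and in particular is bounded in $\Co\left(X\right)$. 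Put $C_{n}:=\frac{1}{n}\sum_{k=1}^{n}h_{k}$. Then $M_{\overline{\lambda}\omega}C_{n}-C_{n}=\frac{1}{n}\left(h_{n+1}-h_{1}\right)$, which tends to $0$ uniformly on every compact subset of $X$, i.e.\ in $\Co\left(X\right)$; and $C_{n}\left(x_{0}\right)=\frac{1}{n}\sum_{k=1}^{n}\left(\overline{\lambda}\,\omega\left(x_{0}\right)\right)^{k}f_{0}\left(x_{0}\right)=f_{0}\left(x_{0}\right)$ for all $n$, because $\overline{\lambda}\,\omega\left(x_{0}\right)=1$.

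By Krein's theorem the closed convex hull in $\Co\left(X\right)$ of the relatively weakly compact set $M_{\omega}B_{\mathbf{F}}$ is again weakly compact, and it contains every $C_{n}$; hence some subnet $C_{n_{\alpha}}$ converges weakly in $\Co\left(X\right)$ to an $f\in\Co\left(X\right)$. Point evaluation at $x_{0}$ is weakly continuous, so $f\left(x_{0}\right)=\lim_{\alpha}C_{n_{\alpha}}\left(x_{0}\right)=f_{0}\left(x_{0}\right)\ne 0$; and $M_{\overline{\lambda}\omega}$ is a continuous, hence weakly continuous, operator on $\Co\left(X\right)$, so $M_{\overline{\lambda}\omega}f-f=\lim_{\alpha}\left(M_{\overline{\lambda}\omega}C_{n_{\alpha}}-C_{n_{\alpha}}\right)=0$, i.e.\ $\omega f=\lambda f$. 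This $f$ is the required eigenfunction, and Lemma \ref{fp} completes the proof of the first assertion.

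The delicate step is precisely the appeal to Krein's theorem, i.e.\ the guarantee that the Cesàro means $C_{n}$ stay inside a weakly compact subset of $\Co\left(X\right)$ (equivalently, that $\left(C_{n}\right)$ has a weak cluster point there). This genuinely uses the relative weak compactness of $M_{\omega}B_{\mathbf{F}}$ and cannot be replaced by a bare pointwise computation: the pointwise limit of $\left(C_{n}\right)$ is $f_{0}\cdot\1_{\omega^{-1}\left(\lambda\right)}$, whose continuity at $x_{0}$ is exactly what is to be proved, so it is the weak compactness hypothesis that must upgrade this pointwise limit to a genuinely continuous function. When $\Co\left(X\right)$ is complete (e.g.\ $X$ locally compact, or more generally a $k_{\R}$-space, which covers the cases of interest for the applications) Krein's theorem applies without reservation; otherwise one should restrict the argument to a suitable complete closed convex set.
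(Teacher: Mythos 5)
Your proof is correct, but it reaches the eigenfunction by a genuinely different mechanism than the paper. The paper also reduces to $\lambda=1$ and uses \cite[4.3, Corollary 2]{floret} to see that $B=\overline{M_{\omega}\overline{B}_{\mathbf{F}}}^{\Co\left(X\right)}$ is a convex pointwise compact subset of $\Co\left(X\right)$ invariant under $M_{\omega}$; it then observes that the point evaluation at $x$ attains its maximum $\|x_{\mathbf{F}}\|>0$ on $B$ and applies the Tychonoff fixed point theorem to the face $D=\left\{f\in B: f\left(x\right)=\|x_{\mathbf{F}}\|\right\}$ to produce $f$ with $\omega f=f$ and $f\left(x\right)\ne 0$. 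You instead run a Ces\`{a}ro-mean argument: the averages $C_{n}$ of the orbit $\left(\overline{\lambda}\omega\right)^{k}f_{0}$ are asymptotically fixed by $M_{\overline{\lambda}\omega}$ and have constant value $f_{0}\left(x_{0}\right)$ at $x_{0}$, so any pointwise cluster point lying in $\Co\left(X\right)$ is the required eigenfunction. Both arguments consume the compactness hypothesis at exactly the same spot --- one needs the closed convex hull of $M_{\omega}B_{\mathbf{F}}$ to be pointwise compact in $\Co\left(X\right)$ --- and the reference the paper uses, \cite[4.3, Corollary 2]{floret}, is precisely a Krein-type theorem for sets of continuous functions that does not require completeness of $\Co\left(X\right)$; so the completeness caveat in your last paragraph can be dropped (and note that pointwise convergence of the subnet already suffices for the identity $\omega f=\lambda f$, so you never need full weak convergence, only that the limit is continuous). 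Your route avoids the fixed-point theorem and is essentially the alternative proof the paper itself sketches in Remark \ref{mem2}, except that by averaging inside $\Co\left(X\right)$ rather than in $\mathbf{F}$ you work under the exact hypothesis of the theorem (relative weak compactness of $M_{\omega}B_{\mathbf{F}}$ in $\Co\left(X\right)$) instead of the stronger assumption that $M_{\omega}$ is weakly compact on $\mathbf{F}$.
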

\begin{proof}
Let $\lambda\in\T$. Replacing $\omega$ with $\lambda\omega$ if needed we may assume that $\lambda =1$. Since $\omega$ is continuous due to Proposition \ref{hc}, $\omega^{-1}\left(1\right)$ is closed. Let $x\in \omega^{-1}\left(1\right)$.

Since $M_{\omega}B_{\mathbf{F}}$ is relatively weakly compact in $\Co\left(X\right)$, it follows (see \cite[4.3, Corollary 2]{floret}) that $B=\overline{M_{\omega}\overline{B}_{\mathbf{F}}}^{\Co\left(X\right)}$ is a convex pointwise compact set in $\Co\left(X\right)$. Since $\|M_{\omega}\|\le 1$ we have that $M_{\omega}\overline{B}_{\mathbf{F}}\subset \overline{B}_{\mathbf{F}}$, from where $M_{\omega}M_{\omega}\overline{B}_{\mathbf{F}}\subset M_{\omega}\overline{B}_{\mathbf{F}}$, and since $M_{\omega}$ is continuous on $\Co\left(X\right)$ we get $M_{\omega}B\subset B$. Due to pointwise compactness of $B$, the functional $x_{\mathbf{F}}$ attains its maximum on it, which is equal to $\sup\limits_{f\in \overline{B}_{\mathbf{F}} }\left|\left[M_{\omega}f\right]\left(x\right)\right|=\|M_{\omega}^{*}x_{\mathbf{F}}\|=\|\omega\left(x\right)x_{\mathbf{F}}\|=\|x_{\mathbf{F}}\|>0$.

Consider $D=\left\{f\in B\left|f\left(x\right)=\|x_{\mathbf{F}}\|\right.\right\}$, which is thus a non-empty convex pointwise compact set in $\Co\left(X\right)$. For every $f\in D$ we have $\left[M_{\omega}f\right]\left(x\right)=\omega\left(x\right)f\left(x\right)=\|x_{\mathbf{F}}\|$. Since $M_{\omega}B\subset B$, this implies that $M_{\omega}D\subset D$. Therefore, $\left.M_{\omega}\right|_{D}$ is a pointwise continuous self-map of a pointwise compact convex set $D$. Hence, by Tychonoff Fixed Point Theorem (see \cite[Theorem 12.19]{fhhmz}), there is $f\in D$ such that $M_{\omega}f=f$, i.e. $\omega f =f$.

Since $x\in \omega^{-1}\left(1\right)$ was chosen arbitrarily, from Lemma \ref{fp}, $\omega^{-1}\left(1\right)$ is open, and therefore clopen.\medskip

If $X$ is connected, every clopen set is either empty, or equal to $X$. Hence, if $\omega\left(x\right)=\lambda$, for some $\lambda\in\T$, the set $\omega^{-1}\left(\lambda\right)$ is clopen and nonempty, and so $\omega\equiv\lambda$.
\end{proof}

Let $J_{\mathbf{F}}$ be the embedding of $\mathbf{F}$ into $\Co\left(X\right)$. The condition of relative weak compactness of $M_{\omega}B_{\mathbf{F}}$ in $\Co\left(X\right)$ is equivalent to weak compactness of $J_{\mathbf{F}}M_{\omega}$ as an operator from $\mathbf{F}$ into $\Co\left(X\right)$. This operator is weakly compact whenever either $J_{\mathbf{F}}$ or $M_{\omega}$ is. In particular, we get the following ``Maximum Modulus Principle for Multipliers''.

\begin{theorem}\label{either}
Let $X$ be connected and let $\mathbf{F}$ be a $1$-independent NSCF over $X$. Let $\omega:X\to\C$ be a non-constant multiplier of $\mathbf{F}$. Assume furthermore that either $\mathbf{F}$ is weakly compactly embedded, or $M_{\omega}$ is a weakly compact operator on $\mathbf{F}$. Then $\left|\omega\left(x\right)\right|<\|M_{\omega}\|$, for every $x\in X$.
\end{theorem}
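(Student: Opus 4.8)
The plan is to reduce the statement to Theorem \ref{mp} by rescaling the symbol. First I would dispose of the degenerate case: if $\|M_{\omega}\|=0$, then $M_{\omega}=0$, so for each $x\in X$ and each $f\in\mathbf{F}$ we have $\omega\left(x\right)f\left(x\right)=0$; since $\mathbf{F}$ is $1$-independent there is $f\in\mathbf{F}$ with $f\left(x\right)\ne0$, forcing $\omega\left(x\right)=0$. Thus $\omega\equiv0$ is constant, contrary to hypothesis, and hence $c:=\|M_{\omega}\|>0$.

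Next, set $\omega':=c^{-1}\omega$, so that $M_{\omega'}=c^{-1}M_{\omega}$ is an operator of norm $1$ on $\mathbf{F}$, and $\omega'$ is again a non-constant multiplier. I claim that $M_{\omega'}B_{\mathbf{F}}$ is relatively weakly compact in $\Co\left(X\right)$. Indeed, as noted in the remark preceding the theorem, this is equivalent to weak compactness of $J_{\mathbf{F}}M_{\omega'}=c^{-1}J_{\mathbf{F}}M_{\omega}$ as an operator from $\mathbf{F}$ into $\Co\left(X\right)$; and $J_{\mathbf{F}}M_{\omega}$ is weakly compact because it factors as a composition of a bounded operator with a weakly compact one --- either $J_{\mathbf{F}}$ is weakly compact (when $\mathbf{F}$ is weakly compactly embedded) or $M_{\omega}\in\Lo\left(\mathbf{F}\right)$ is weakly compact --- and rescaling preserves weak compactness.

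With this claim in hand, $\omega'$ satisfies all the hypotheses of Theorem \ref{mp}. Now I would argue by contradiction: if $\left|\omega\left(x_{0}\right)\right|=c$ for some $x_{0}\in X$, then $\left|\omega'\left(x_{0}\right)\right|=1$, and since $X$ is connected, Theorem \ref{mp} yields $\omega'\equiv\lambda$ for some $\lambda\in\T$; hence $\omega\equiv c\lambda$ is constant, a contradiction. On the other hand, Proposition \ref{hc} gives $\left|\omega\left(x\right)\right|\le\|M_{\omega}\|=c$ for every $x\in X$. Combining these two facts yields $\left|\omega\left(x\right)\right|<\|M_{\omega}\|$ for every $x\in X$, as desired.

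I expect no genuine obstacle here: all the substance is already contained in Theorem \ref{mp}, and the remaining work is the routine reduction --- normalizing $\|M_{\omega}\|$ to $1$ and checking that the two alternative hypotheses both deliver relative weak compactness of $M_{\omega'}B_{\mathbf{F}}$ in $\Co\left(X\right)$ via the ideal property of weakly compact operators. The only point requiring a moment's care is excluding $\|M_{\omega}\|=0$, which is where $1$-independence is used.
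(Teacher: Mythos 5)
Your proposal is correct and follows exactly the route the paper intends: the paper proves Theorem \ref{either} implicitly via the remark preceding it (weak compactness of $J_{\mathbf{F}}M_{\omega}$ from either hypothesis, then Theorem \ref{mp} after normalizing), combined with Proposition \ref{hc} for the inequality $\left|\omega\left(x\right)\right|\le\|M_{\omega}\|$. Your explicit handling of the degenerate case $\|M_{\omega}\|=0$ is a small but welcome addition of rigor.
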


\begin{corollary}{[cf. \cite[Theorem 3.17]{erz2}]}\label{cnu}
Let $X$ be connected and let $\mathbf{F}$ be a $1$-independent weakly compactly embedded NSCF over $X$. If $\omega:X\to\C$ is a non-constant function such that $\|M_{\omega}\|\le 1$ on $\mathbf{F}$, then it is completely non-unitary, i.e. has no unitary restrictions to proper subspaces.
\end{corollary}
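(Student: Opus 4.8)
The plan is to argue by contradiction and reduce the whole statement to the strict pointwise bound furnished by Theorem~\ref{either}. First I would note that $\omega$ is a non-constant multiplier of the $1$-independent, weakly compactly embedded NSCF $\mathbf{F}$ over the connected space $X$, so Theorem~\ref{either} applies directly and yields $|\omega(x)|<\|M_{\omega}\|\le 1$ for \emph{every} $x\in X$; the thing to keep in mind is that this inequality is strict at every single point. Then I would assume, for contradiction, that $M\ne\{0\}$ is a subspace of $\mathbf{F}$ to which $M_{\omega}$ restricts as a unitary operator, meaning $M_{\omega}(M)=M$ and $\left.M_{\omega}\right|_{M}$ is an isometry; equivalently, $\left.M_{\omega}\right|_{M}$ is a bijective isometry of $M$.

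Next I would manufacture a point $x_{0}$ at which $|\omega|$ equals $1$, contradicting the above. Since $M\ne\{0\}$, pick $g\in M$ with $g\ne 0$ and then $x_{0}\in X$ with $g(x_{0})\ne 0$, so the point evaluation $\left(x_{0}\right)_{M}\in M^{*}$ is nonzero. A direct computation (cf.\ Proposition~\ref{rec}) shows $A\left(x_{0}\right)_{M}=\omega(x_{0})\left(x_{0}\right)_{M}$, where $A:=\left(\left.M_{\omega}\right|_{M}\right)^{*}$; and since $\left.M_{\omega}\right|_{M}$ is a bijective isometry of $M$, so is $A$, with $\|A\|=\|A^{-1}\|=1$. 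Injectivity of $A$ and $\left(x_{0}\right)_{M}\ne 0$ force $\omega(x_{0})\ne 0$; then $\|A\|=1$ gives $|\omega(x_{0})|\le 1$, while $A^{-1}\left(x_{0}\right)_{M}=\omega(x_{0})^{-1}\left(x_{0}\right)_{M}$ together with $\|A^{-1}\|=1$ gives $|\omega(x_{0})|\ge 1$. Hence $|\omega(x_{0})|=1$, contradicting $|\omega(x_{0})|<\|M_{\omega}\|\le 1$. So no such $M$ exists, and in particular $M_{\omega}$ admits no unitary restriction to any proper subspace, i.e.\ it is completely non-unitary.

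I do not expect a serious obstacle here, since Theorem~\ref{either} carries the analytic weight; the only point requiring a little care is that the invariant subspace $M$ need not itself be a $1$-independent NSCF, so Proposition~\ref{hc} is not available on $M$ verbatim. I avoid this by never treating $M$ as a function space in its own right: I use only one point evaluation $\left(x_{0}\right)_{M}$, chosen so that it is nonzero, and the fact that $\left.M_{\omega}\right|_{M}$ and its inverse are honest isometries, which is exactly what makes the adjoint/eigenvalue computation go through. I would also be careful to use the correct reading of ``unitary restriction'': an invertible isometry of $M$, not merely an isometric embedding of $M$ into $\mathbf{F}$ — the latter is genuinely possible (the shift $M_{z}$ on the Hardy space is an isometry of the whole space) and is not what the corollary rules out.
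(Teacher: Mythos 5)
Your proposal is correct and follows essentially the same route as the paper: assume a non-trivial unitary restriction, use the eigenvector relation $\left(\left.M_{\omega}\right|_{M}\right)^{*}\left(x_{0}\right)_{M}=\omega\left(x_{0}\right)\left(x_{0}\right)_{M}$ at a point where the evaluation functional is nonzero to force $\left|\omega\left(x_{0}\right)\right|=1$, and contradict the strict bound from Theorem~\ref{either}. The only cosmetic difference is that you derive $\left|\omega\left(x_{0}\right)\right|=1$ from $\|A\|=\|A^{-1}\|=1$ in two inequalities, whereas the paper gets it in one line from the fact that the adjoint of a surjective isometry is an isometry.
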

\begin{proof}
Let $\mathbf{E}$ be a non-trivial subspace of $\mathbf{F}$ (and so a NSCF) such that the restriction of $M_{\omega}$ on $\mathbf{E}$ is unitary. Let $x$ be such that $x_{\mathbf{E}}\ne 0_{\mathbf{E}^{*}}$. Then $\left(\left.M_{\omega}\right|_{\mathbf{E}}\right)^{*}$ is an isometry, and so $0\ne \|x_{\mathbf{E}}\|=\|\left(\left.M_{\omega}\right|_{\mathbf{E}}\right)^{*}x_{\mathbf{E}}\|=\|\omega\left(x\right)x_{\mathbf{E}}\|=\left|\omega\left(x\right)\right|\|x_{\mathbf{E}}\|$, from where $\left|\omega\left(x\right)\right|=1=\|M_{\omega}\|$. Contradiction.
\end{proof}

As an application we can conclude that non-trivial isometric multiplication operators have a Wold decomposition (see \cite{romanov}) into a direct sum of shifts.

\begin{example}
The classic example of a MO acting like a shift on a space of functions is the multiplication with a free constant on the Hardy space. Namely, let $\mathbf{H}$ be the Hardy space over $\D$, and let $\omega=Id_{\D}$. Then it is easy to see that $M_{\omega}$ is an isometry from $\mathbf{H}$ onto the $1$-co-dimensional subspace $\mathbf{H}_{\D\backslash\left\{0\right\}}$. Also, one can show that $M_{\omega}$ is an isometry on $\mathbf{H}$ if $\omega$ is an infinite Blashke product (see \cite[IV.A]{koosis}). In this case $M_{\omega}\mathbf{H}\subset\mathbf{H}_{\D\backslash \omega^{-1}\left(0\right)}$, which is of infinite co-dimension, since $\omega^{-1} \left(0\right)$ is infinite, and point evaluations are linearly independent on $\mathbf{H}$. Therefore, $M_{\omega}$ is an infinite sum of unit shifts.\qed
\end{example}

As was mentioned above, the condition of relative weak compactness of $M_{\omega}B_{\mathbf{F}}$ is always satisfied if $\mathbf{F}$ is weakly compactly embedded. It turns out that this implication can be reversed in some sense.

\begin{proposition}\label{mow}
A NSCF $\mathbf{F}$ over $X$ is weakly compactly embedded if and only if for every $x\in X$ there is a multiplier $\omega$ of $\mathbf{F}$ such that $\omega\left(x\right)\ne 0$ and $M_{\omega}B_{\mathbf{F}}$ is relatively weakly compact in $\Co\left(X\right)$.
\end{proposition}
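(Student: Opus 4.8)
The forward implication requires no work: if $\mathbf F$ is weakly compactly embedded, then for every $x\in X$ the constant function $\omega\equiv 1$ is a multiplier of $\mathbf F$ with $\omega(x)=1\neq 0$, and $M_{\omega}B_{\mathbf F}=B_{\mathbf F}$ is relatively weakly compact in $\Co(X)$ by the very definition of weak compact embedding. So the content is entirely in the converse.

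For the converse, the plan is to exploit the same description of relative weak compactness in $\Co(X)$ that underlies the proof of Theorem \ref{mp}: a subset $A$ of $\Co(X)$ which is bounded in $\Co(X)$ is relatively weakly compact in $\Co(X)$ \emph{if and only if} its closure in the topology of pointwise convergence on $\C^{X}$ is contained in $\Co(X)$. The ``only if'' here is easy --- the point evaluations belong to $\Co(X)^{*}$, so weak closures of $A$ are pointwise compact and therefore pointwise closed in $\C^{X}$ --- while the ``if'' is the Grothendieck-type statement quoted from \cite{floret}; that the pointwise closure of a bounded set is pointwise compact in the first place is just Tychonoff. Since $\mathbf F$ is an NSCF, $B_{\mathbf F}$ is bounded in $\Co(X)$, and every set $M_{\omega}B_{\mathbf F}$ occurring in the hypothesis is bounded in $\Co(X)$ because it is relatively weakly compact there. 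Hence it suffices to show that the pointwise closure of $B_{\mathbf F}$ in $\C^{X}$ lies in $\Co(X)$.

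To carry this out, I would fix $g$ in that pointwise closure, choose a net $(f_{\alpha})\subset B_{\mathbf F}$ with $f_{\alpha}\to g$ pointwise, and fix an arbitrary $x_{0}\in X$. The key move is to bring in the multiplier $\omega$ supplied by the hypothesis at $x_{0}$: we have $\omega(x_{0})\neq 0$, the set $M_{\omega}B_{\mathbf F}$ is relatively weakly compact in $\Co(X)$, and, by Proposition \ref{hc}, $\omega$ is continuous. Then $M_{\omega}f_{\alpha}=\omega f_{\alpha}\to\omega g$ pointwise with $\omega f_{\alpha}\in M_{\omega}B_{\mathbf F}$, so the characterization above, applied to the relatively weakly compact set $M_{\omega}B_{\mathbf F}$, gives $\omega g\in\Co(X)$. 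Since $\omega$ is continuous and $\omega(x_{0})\neq 0$, the set $U=\{x\in X:\omega(x)\neq 0\}$ is an open neighbourhood of $x_{0}$, and on $U$ one has $g=(\omega g)/\omega$, a quotient of continuous functions with non-vanishing denominator, hence $g$ is continuous on $U$ and in particular at $x_{0}$. As $x_{0}$ was arbitrary, $g\in\Co(X)$, and the converse follows.

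The step I expect to be the main obstacle is the equivalence invoked at the start of the second paragraph, and more precisely its ``if'' half --- passing from ``pointwise closure inside $\Co(X)$'' to ``relatively weakly compact in $\Co(X)$'' --- which is the Grothendieck/Floret ingredient; once that is granted the rest is bookkeeping plus the elementary ``ratio of continuous functions'' argument. One further point worth flagging is that this last argument genuinely needs $\omega$ to be continuous, so that $\{\omega\neq 0\}$ is open; this is where Proposition \ref{hc}, and hence the $1$-independence of $\mathbf F$, enters, and for a symbol that is merely a function on $X$ the identity $g=(\omega g)/\omega$ would convey no local information near $x_{0}$.
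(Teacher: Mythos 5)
Your argument is correct, and its core mechanism is the same as the paper's: for a pointwise limit $g$ of elements of $B_{\mathbf F}$ and a point $x_{0}$, multiply by the multiplier $\omega$ furnished by the hypothesis at $x_{0}$, deduce from the relative weak compactness of $M_{\omega}B_{\mathbf F}$ in $\Co\left(X\right)$ that $\omega g$ is continuous, divide by $\omega$ to get continuity of $g$ at $x_{0}$, and conclude with the Floret/Grothendieck equivalence of relative pointwise and relative weak compactness for bounded subsets of $\Co\left(X\right)$. The one genuine difference is how the pointwise closure of $B_{\mathbf F}$ is accessed: the paper realizes it as $J^{**}\overline{B}_{\mathbf F^{**}}$ via a locally convex version of the Dunford--Schwartz identity $T^{**}\overline{B}_{\mathbf F^{**}}=\overline{TB_{\mathbf F}}$ applied to $T=JM_{\omega}=MJ$ in $\Fo\left(X\right)$, whereas you take an arbitrary pointwise convergent net in $B_{\mathbf F}$ and argue directly in $\C^{X}$. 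Your route is more elementary and loses nothing: it dispenses with the bidual and with the need to adapt the Dunford--Schwartz theorem to a locally convex target space, at the modest cost of spelling out the easy half of the pointwise/weak characterization (that a relatively weakly compact subset of $\Co\left(X\right)$ has its $\C^{X}$-pointwise closure inside $\Co\left(X\right)$), which you do correctly. You are also right to flag that the division step needs $\omega$ to be continuous near $x_{0}$ and hence invokes Proposition \ref{hc}; the paper's proof relies on exactly the same fact even though $1$-independence is not listed in the hypotheses of the proposition, so this is a shared, not a new, debt.
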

\begin{proof}
Necessity: If $\mathbf{F}$ is weakly compactly embedded, $\omega\equiv 1$ is a multiplier of $\mathbf{F}$ that does not vanish and such that $M_{\omega}B_{\mathbf{F}}=B_{\mathbf{F}}$ is relatively weakly compact in $\Co\left(X\right)$.\medskip

Sufficiency: Let $J$ be the embedding of $\mathbf{F}$ into $\Fo\left(X\right)$. Let $x\in X$, and let $\omega$ be a multiplier of $\mathbf{F}$ such that $\omega\left(x\right)\ne 0$ and $B=M_{\omega}B_{\mathbf{F}}$ is relatively weakly compact in $\Co\left(X\right)$. Let $M$ be the multiplication operator with symbol $\omega$ defined on $\Fo\left(X\right)$ (we keep labeling the corresponding operator on $\mathbf{F}$ by $M_{\omega}$).

Since $\Fo\left(X\right)$ is reflexive we have $M^{**}=M$. Moreover, if  $T=JM_{\omega}=MJ$, then $MJ^{**}\overline{B}_{\mathbf{F}^{**}}=T^{**}\overline{B}_{\mathbf{F}^{**}}=\overline{TB_{\mathbf{F}}}$ (the proof of \cite[VI.4, Theorem 2]{ds} carries over to the case when the target space is locally convex). The latter set is in fact equal $\overline{B}^{\Fo\left(X\right)}$, and since $B$ is weakly compact in $\Co\left(X\right)$, it follows that $\overline{B}^{\Co\left(X\right)}$ is pointwise compact, from where $MJ^{**}\overline{B}_{\mathbf{F}^{**}}=\overline{B}^{\Fo\left(X\right)}=\overline{B}^{\Co\left(X\right)}\subset \Co\left(X\right)$.

Hence, if $f\in \overline{B}_{\mathbf{F}^{**}}$, then $g=J^{**}f$ is a function on $X$ such that $\omega g $ is continuous. Since $\omega$ is continuous and $\omega\left(x\right)\ne 0$, $g $ is continuous at $x$. Since $x$ and $f$ were chosen arbitrarily, it follows that $J^{**}\overline{B}_{\mathbf{F}^{**}}=\overline{B_{\mathbf{F}}}^{\Fo\left(X\right)}\subset \Co\left(X\right)$, and so $B_{\mathbf{F}}$ is relatively pointwise compact in $\Co\left(X\right)$ and so is relatively weakly compact in  (see \cite[4.3, Corollary 2]{floret}).
\end{proof}

Note that both of the conditions in the proposition depend on the way $\mathbf{F}$ sits in $\Co\left(X\right)$. It is natural to wonder if one can obtain an equivalence between stronger conditions intrinsic for $\mathbf{F}$:

\begin{question}
If $X$ is connected, is it true that a BSCF $\mathbf{F}$ over $X$ is reflexive if and only if for every $x\in X$ there is $\omega:X\to\C$ such that $\omega\left(x\right)\ne 0$ and $M_{\omega}$ is a weakly compact operator on $\mathbf{F}$?
\end{question}

Of course, necessity is obvious, since the $Id_{\mathbf{F}}$ is weakly compact as soon as $\mathbf{F}$ is reflexive. Without the assumption that $X$ is connected the answer is negative: consider $\mathbf{F}=l^{1}$ as a non-reflexive NSCF over $\N$, and then $\omega:\N\to\C$ defined by $\omega\left(n\right)=\frac{1}{n}$ does not vanish and generates a compact multiplication operator. Strong evidences in favour of the affirmative answer to the question are Corollary \ref{dp} below and Remark \ref{wme}.\medskip

While being weakly compact is a rather common property of multiplication operators (for example this is the case for all MO's if $\mathbf{F}$ is reflexive), compactness occurs only in trivial cases.

\begin{proposition}\label{moc}
Let $\mathbf{F}$ be a $1$-independent NSCF over $X$. If $\omega:X\to\C$ is such that $M_{\omega}$ is a compact operator on $\mathbf{F}$, then $\omega\left(X\right)$ is either finite, or is a sequence of numbers that converges to $0$. Moreover, $\omega^{-1}\left(\lambda\right)$ is clopen in $X$, for every $\lambda\in\C\backslash\left\{0\right\}$, and $\omega$ is constant on every component of $X$. Furthermore:
\item[(i)] If $X$ is connected and $\dim\mathbf{F}=\8$, then $\omega\equiv 0$.
\item[(ii)] If the set of point evaluations on $\mathbf{F}$ is linearly independent, then $X\backslash\omega^{-1}\left(0\right)$ is at most countable collection of isolated points in $X$.
\end{proposition}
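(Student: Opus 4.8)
The plan is to read all the assertions off the Riesz--Schauder theory of the compact operator $M_{\omega}$, exploiting that the point evaluations are eigenvectors of $M_{\omega}^{*}$. Since $\mathbf{F}$ is $1$-independent, $\omega$ is continuous and $M_{\omega}$ is bounded (Proposition \ref{hc}), and $M_{\omega}^{*}x_{\mathbf{F}}=\omega\left(x\right)x_{\mathbf{F}}$ with $x_{\mathbf{F}}\ne 0_{\mathbf{F}^{*}}$ for every $x\in X$ (Proposition \ref{rec}); hence $\omega\left(X\right)\subseteq\sigma_{p}\left(M_{\omega}^{*}\right)$. As $M_{\omega}$ is compact, so is $M_{\omega}^{*}$, so $\sigma\left(M_{\omega}^{*}\right)$ is at most countable and has $0$ as its only possible accumulation point. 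Therefore $\omega\left(X\right)$, being a subset of that set, is either finite or --- being an infinite bounded subset of $\C$ whose sole accumulation point is $0$ --- can be listed as a sequence converging to $0$; this is the first assertion.

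Next, fix $\lambda\ne 0$. Continuity of $\omega$ makes $\omega^{-1}\left(\lambda\right)$ closed, so it remains to prove it is open, and we may assume it is nonempty. From $\left[\left(M_{\omega}-\lambda\right)^{j}f\right]\left(y\right)=\left(\omega\left(y\right)-\lambda\right)^{j}f\left(y\right)$ one sees that, for every $j\ge 1$, the condition $\left(M_{\omega}-\lambda\right)^{j}f=0$ is equivalent to $\left\{f\ne 0\right\}\subseteq\omega^{-1}\left(\lambda\right)$, hence (since $\omega^{-1}\left(\lambda\right)$ is closed) to $\supp f\subseteq\omega^{-1}\left(\lambda\right)$; thus $\Ker\left(M_{\omega}-\lambda\right)=\Ker\left(M_{\omega}-\lambda\right)^{2}=\mathbf{F}_{\omega^{-1}\left(\lambda\right)}$, i.e. $M_{\omega}-\lambda$ has ascent $1$. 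By Riesz--Schauder theory for the compact operator $M_{\omega}$ (where ascent and descent coincide) this yields the topological direct sum $\mathbf{F}=\mathbf{F}_{\omega^{-1}\left(\lambda\right)}\oplus\left(M_{\omega}-\lambda\right)\mathbf{F}$, with $\mathbf{F}_{\omega^{-1}\left(\lambda\right)}$ finite-dimensional and $\left(M_{\omega}-\lambda\right)\mathbf{F}$ closed. Now take $x\in\omega^{-1}\left(\lambda\right)$: the functional $x_{\mathbf{F}}\ne 0_{\mathbf{F}^{*}}$ annihilates $\left(M_{\omega}-\lambda\right)\mathbf{F}$, since $\left\langle x_{\mathbf{F}},\left(M_{\omega}-\lambda\right)f\right\rangle=\left\langle\left(M_{\omega}^{*}-\lambda\right)x_{\mathbf{F}},f\right\rangle=0$, so by the decomposition it does not vanish on $\mathbf{F}_{\omega^{-1}\left(\lambda\right)}$; hence there is $f\in\mathbf{F}$ with $\supp f\subseteq\omega^{-1}\left(\lambda\right)$ and $f\left(x\right)\ne 0$. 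Such an $f$ satisfies $\omega f=\lambda f$, so Lemma \ref{fp} applied to $\lambda^{-1}\omega$ shows $\omega^{-1}\left(\lambda\right)=\left(\lambda^{-1}\omega\right)^{-1}\left(1\right)$ is open, hence clopen. Consequently, on any component $C$ of $X$ either $\omega$ attains a value $\lambda\ne 0$ on $C$ --- in which case $\omega^{-1}\left(\lambda\right)\cap C$ is a nonempty clopen subset of the connected space $C$, so equals $C$ --- or $\omega\equiv 0$ on $C$; either way $\omega$ is constant on $C$.

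Part (i) is then immediate: if $X$ is connected, $\omega\equiv c$ for a constant $c$, and $c\ne 0$ would force $Id_{\mathbf{F}}=c^{-1}M_{\omega}$ to be compact, hence $\dim\mathbf{F}<\8$, contrary to hypothesis, so $\omega\equiv 0$. For (ii), note that for $\lambda\ne 0$ the point evaluations $\left\{x_{\mathbf{F}}:x\in\omega^{-1}\left(\lambda\right)\right\}$ all lie in the finite-dimensional space $\Ker\left(M_{\omega}^{*}-\lambda\right)$ and are linearly independent by assumption, so each fibre $\omega^{-1}\left(\lambda\right)$ over a nonzero value is finite; since $\omega\left(X\right)\setminus\left\{0\right\}$ is at most countable, $X\setminus\omega^{-1}\left(0\right)=\bigcup_{\lambda\ne 0}\omega^{-1}\left(\lambda\right)$ is at most countable. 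Moreover every such $x$ is isolated: $\omega^{-1}\left(\omega\left(x\right)\right)$ is open (being clopen) and finite, so in the Hausdorff space $X$ the set $\omega^{-1}\left(\omega\left(x\right)\right)\setminus\left\{x\right\}$ is closed, whence $\left\{x\right\}$, its relative complement in the open set $\omega^{-1}\left(\omega\left(x\right)\right)$, is open.

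I expect the only real ingredient to be the identity $\Ker\left(M_{\omega}-\lambda\right)=\Ker\left(M_{\omega}-\lambda\right)^{2}$, which pins the ascent at $1$ and thereby forces the Riesz decomposition of $\mathbf{F}$ to split precisely along the two subspaces $\mathbf{F}_{\omega^{-1}\left(\lambda\right)}$ and $\left(M_{\omega}-\lambda\right)\mathbf{F}$ that are visible from $\omega$; everything after that is bookkeeping with the earlier lemmas. (A minor technical point: Riesz--Schauder theory is customarily stated over Banach spaces, so if $\mathbf{F}$ is merely normed the spectral facts used above should be applied in the standard way --- e.g. after passing to the completion, or via $M_{\omega}^{*}$ on $\mathbf{F}^{*}$ --- and I do not foresee any essential difficulty there.)
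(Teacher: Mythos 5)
Your proof is correct, and on most points it coincides with the paper's: the description of $\omega(X)$ via the spectrum of the compact adjoint, the constancy on components, part (i), and part (ii) (finite-dimensionality of the eigenspaces of $M_{\omega}^{*}$ against linear independence of the point evaluations) are all argued the same way. Where you genuinely diverge is the central claim that $\omega^{-1}(\lambda)$ is clopen for $\lambda\ne 0$. The paper extracts this almost for free from the first assertion: since $\omega(X)$ has no nonzero accumulation point, the singleton $\{\lambda\}$ is already clopen in the subspace $\omega(X)$, and clopenness of $\omega^{-1}(\lambda)$ in $X$ is then just continuity of $\omega$ --- no Riesz decomposition and no appeal to Lemma \ref{fp}. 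You instead compute that $M_{\omega}-\lambda$ has ascent $1$ (correctly, via the pointwise identity for $(M_{\omega}-\lambda)^{j}$), invoke the Riesz splitting $\mathbf{F}=\mathbf{F}_{\omega^{-1}(\lambda)}\oplus(M_{\omega}-\lambda)\mathbf{F}$, and use the fact that $x_{\mathbf{F}}$ annihilates the range to manufacture a fixed function for Lemma \ref{fp}. This is heavier but buys something the paper's argument does not: it exhibits, for each $x$ with $\omega(x)=\lambda\ne 0$, an actual $f\in\mathbf{F}$ supported in $\omega^{-1}(\lambda)$ with $f(x)\ne 0$, i.e.\ it shows $\mathbf{F}_{\omega^{-1}(\lambda)}$ is itself $1$-independent over $\omega^{-1}(\lambda)$. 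One caution on your closing remark: passing to the completion is not as routine as you suggest, since the kernel of the extended operator need not equal $\mathbf{F}_{\omega^{-1}(\lambda)}$ and your ascent computation uses the function-space structure of $\mathbf{F}$; the clean justification is that Riesz theory for compact operators holds on arbitrary normed spaces --- or simply to adopt the paper's topological shortcut, which avoids the issue entirely.
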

\begin{proof}
Recall that $\omega\left(x\right)$ is an eigenvalue of $M_{\omega}^{*}$, for every $x\in X$. If $M_{\omega}$ is compact, then so is $M_{\omega}^{*}$  (see \cite[Theorem 15.3]{fhhmz}), from where  $\omega\left(X\right)$ is contained in a sequence of complex numbers that converges to $0$ (see \cite[Corollary 15.24]{fhhmz}). Hence, $\left\{\lambda\right\}$ is clopen in $\omega\left(X\right)$, for every $\lambda\in \omega\left(X\right)\backslash\left\{0\right\}$. Since $\omega$ is continuous, it follows that $\omega^{-1}\left(\lambda\right)$ is clopen in $X$.

If $Y$ is a component of $X$, and $\lambda\in \omega\left(X\right)\backslash\left\{0\right\}$, then $\omega^{-1}\left(\lambda\right)\cap Y$ is clopen in $Y$. Since the latter is connected, it follows that either $\left.\omega\right|_{Y}\equiv \lambda$, or $\omega^{-1}\left(\lambda\right)\cap Y=\varnothing$. Consequently, either $\left.\omega\right|_{Y}\equiv 0$, or $\omega^{-1}\left(0\right)\cap Y=\varnothing$. Hence, $\omega$ is constant on every component of $X$.\medskip

(i): If $X$ is connected, $\omega$ is a constant function, and so $M_{\omega}=\lambda Id_{\mathbf{F}}$, for some $\lambda\in\C$. In the case when $\dim\mathbf{F}=\8$ the only value of $\lambda$ compatible with compactness of $\lambda Id_{\mathbf{F}}$ is $0$.\medskip

(ii): Let $\lambda\in \omega\left(X\right)\backslash\left\{0\right\}$. Due to compactness, the eigenspace of $M_{\omega}^{*}$ corresponding to $\lambda$ is finitely dimensional (see \cite[Corollary 15.24]{fhhmz}). Since the point evaluations at the elements of $\omega^{-1}\left(\lambda\right)$ belong to that eigenspace and are linearly independent, it follows that $\omega^{-1}\left(\lambda\right)$ is a clopen finite set, and so consists of isolated points.
\end{proof}

In the special case when $\mathbf{F}$ has the Dunford-Pettis property every weakly compact operator on $\mathbf{F}$ is completely continuous, and so every square of a weakly compact operator is compact. Observe that $M_{\omega}^{2}=M_{\omega^{2}}$, and the conclusions of Proposition \ref{moc} hold for $\omega$ if and only if they hold for $\omega^{2}$. Therefore, we get the following result.

\begin{corollary}\label{dp}
The conclusions of Proposition \ref{moc} hold if $\mathbf{F}$ has the Dunford-Pettis property and $M_{\omega}$ is merely weakly compact.
\end{corollary}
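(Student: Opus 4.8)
The plan is to bootstrap from the genuinely compact case, which is already settled by Proposition \ref{moc}, by replacing the symbol $\omega$ with $\omega^{2}$. First I would recall two standard facts about the Dunford--Pettis property: a normed space has the DPP exactly when every weakly compact operator on it is completely continuous, and a completely continuous operator sends relatively weakly compact sets to relatively norm-compact sets (given a sequence in the image, pull it back, extract a weakly convergent subsequence via Eberlein--\v{S}mulian, and push it forward to a norm-convergent subsequence). Hence, if $M_{\omega}$ is weakly compact, then $M_{\omega}B_{\mathbf{F}}$ is relatively weakly compact, and applying $M_{\omega}$ a second time --- now in its guise as a completely continuous operator --- shows that $M_{\omega}^{2}B_{\mathbf{F}}$ is relatively norm-compact. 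Since $M_{\omega}^{2}=M_{\omega^{2}}$, this says precisely that $M_{\omega^{2}}$ is a compact operator on $\mathbf{F}$.

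Because $\mathbf{F}$ is a $1$-independent NSCF (a hypothesis inherited from Proposition \ref{moc}), that proposition now applies to the symbol $\omega^{2}$ and yields all of its conclusions with $\omega^{2}$ in place of $\omega$: the set $\omega^{2}\left(X\right)$ is finite or a null sequence; $\left(\omega^{2}\right)^{-1}\left(\mu\right)$ is clopen for every $\mu\ne0$; $\omega^{2}$ is constant on each component of $X$; and, under the respective extra hypotheses, statements (i) and (ii) hold for $\omega^{2}$. The remaining step is to transfer each of these back to $\omega$ itself, and this is essentially the only place where a small argument is needed.

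For the transfer I would use that $z\mapsto z^{2}$ is a continuous, at most two-to-one self-map of $\C$, together with the facts that $\omega$ is continuous (Proposition \ref{hc}) and that $\omega^{-1}\left(0\right)=\left(\omega^{2}\right)^{-1}\left(0\right)$. Concretely: $\omega\left(X\right)$ is contained in the (at most two-to-one) preimage of $\omega^{2}\left(X\right)$ under squaring, hence is countable, and for every $\varepsilon>0$ the set $\left\{x:\left|\omega\left(x\right)\right|>\sqrt{\varepsilon}\right\}$ equals $\left\{x:\left|\omega\left(x\right)\right|^{2}>\varepsilon\right\}$, which has finite image under $\omega^{2}$ and therefore finite image under $\omega$; this is exactly the statement that $\omega\left(X\right)$ is finite or a sequence converging to $0$. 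For $\lambda\ne0$ one has the disjoint union $\left(\omega^{2}\right)^{-1}\left(\lambda^{2}\right)=\omega^{-1}\left(\lambda\right)\sqcup\omega^{-1}\left(-\lambda\right)$, with both summands closed in $X$ (since $\omega$ is continuous), so clopenness of the left-hand side makes each summand open in it and hence clopen in $X$; once $\omega^{-1}\left(\lambda\right)$ is known to be clopen for all $\lambda\ne0$, constancy of $\omega$ on each component follows by the very argument already given in the proof of Proposition \ref{moc}. Conclusion (i) is immediate, since $\omega^{2}\equiv0$ forces $\omega\equiv0$; and conclusion (ii) needs nothing new, as the set $X\backslash\omega^{-1}\left(0\right)=X\backslash\left(\omega^{2}\right)^{-1}\left(0\right)$ is literally unchanged. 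I do not anticipate a genuine obstacle anywhere here; if there is a delicate point at all, it is only the bookkeeping around the ``null sequence'' clause just described.
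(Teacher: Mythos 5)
Your argument is correct and is essentially the same as the paper's: the paper likewise observes that on a space with the Dunford--Pettis property a weakly compact $M_{\omega}$ is completely continuous, so $M_{\omega}^{2}=M_{\omega^{2}}$ is compact, and then invokes Proposition \ref{moc} for $\omega^{2}$ together with the (unstated) equivalence of the conclusions for $\omega$ and $\omega^{2}$. You simply supply the transfer details (the null-sequence bookkeeping and the decomposition $\left(\omega^{2}\right)^{-1}\left(\lambda^{2}\right)=\omega^{-1}\left(\lambda\right)\sqcup\omega^{-1}\left(-\lambda\right)$) that the paper leaves implicit, and these are all sound.
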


In particular, it follows that there is no non-zero weakly compact multiplication operators on many classical function spaces that satisfy Dunford-Pettis condition, including weighted spaces of continuous functions, some of the weighted spaces of holomorphic functions, the ball and (poly)disk algebras, some of the Sobolev spaces, weighted little Lipschitz spaces, the (little) Bloch space and the Bergman and Besov spaces with exponent $1$.

While the Hardy space $H^{1}$ does not have the Dunford-Pettis property, it is known that every weakly compact weighted composition operator on it is automatically compact, from where we again can see that there are no non-zero weakly compact multiplication operators on $H^{1}$. The same property also holds for many of the Lipschitz spaces.

\section{Mean Ergodic Multiplication Operators}

In this section we will discuss possibilities for multiplication operators to be mean ergodic or uniformly mean ergodic. Our proofs are mostly inspired by the methods in \cite{bjr} and \cite{br}, where similar questions were investigated for MO's on weighted spaces of continuous and holomorphic functions. Recall that a continuous linear operator $T$ on a Banach space $E$ is called (resp \emph{uniformly}) \emph{mean ergodic} if there is $P\in\Lo\left(E\right)$ such that $\frac{1}{n}\sum\limits_{k=1}^{n}T^{k}f\xrightarrow{n\to\8} Pf$, for every $f\in E$ (resp $\frac{1}{n}\sum\limits_{1}^{n}T^{n}\xrightarrow{n\to\8} P$ in $\Lo\left(E\right)$). Also, recall that $T\in \Lo\left(E\right)$ is called power-bounded if $\sup\limits_{n\in\N}\|T^{n}\|<+\8$, and more generally Cesaro bounded if $\sup\limits_{n\in\N}\left\|\frac{1}{n}\sum\limits_{k=1}^{n}T^{k}\right\|<+\8$. Let us also recall a characterization of mean ergodicity (see e.g. \cite[Chaper 2, theorems 1.1, 1.3, 1.4 and 2.1]{krengel}).

\begin{theorem}[Mean Ergodic Theorem]
Let $T$ be a power-bounded operator on a Banach space $E$. Then the following are equivalent:
\item[(i)] $T$ is mean ergodic;
\item[(ii)] The sequence $\left\{\frac{1}{n}\sum\limits_{k=1}^{n}T^{k}f\right\}_{n\in\N}$ has a weak cluster point, for every $f\in E$;
\item[(iii)] For every $\nu\in E^{*}\backslash\left\{0_{E^{*}}\right\}$ such that $T^{*}\nu=\nu$ there is $f\in E$ such that $Tf=f$ and $\left<f,\nu\right>\ne 0$;
\item[(iv)] $E\cong \Ker \left(Id_{E}-T\right)\oplus \overline{\left(Id_{E}-T\right)E}$.\medskip

Furthermore, $T$ is uniformly mean ergodic if and only if it is mean ergodic and $\left(Id_{E}-T\right)E$ is closed.
\end{theorem}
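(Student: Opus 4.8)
The plan is to route everything through the Ces\`aro averages $A_n:=\frac1n\sum_{k=1}^{n}T^{k}$. Put $C:=\sup_{n}\|T^{n}\|<\8$, so that $\|A_n\|\le C$ for all $n$, and record the telescoping identity $(Id_E-T)A_n=\frac1n(T-T^{n+1})$, whose norm is at most $2C/n$. From this I would isolate three elementary facts: (a) if $Tf=f$ then $A_nf=f$ for every $n$; (b) if $f\in\overline{(Id_E-T)E}$ then $A_nf\to 0$ (first for $f=(Id_E-T)g$ by the identity, then for the closure by an $\varepsilon/3$ estimate using $\|A_n\|\le C$); (c) $f-A_nf=\frac1n\sum_{k=1}^{n}(Id_E-T)(Id_E+\dots+T^{k-1})f\in(Id_E-T)E$ for every $f$ and $n$. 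Facts (a) and (b) immediately give $\Ker(Id_E-T)\cap\overline{(Id_E-T)E}=\{0\}$, so the sum in (iv) is always direct.

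Next I would prove the cycle (i)$\Rightarrow$(ii)$\Rightarrow$(iv)$\Rightarrow$(i). The first implication is trivial, since the strong limit is a weak cluster point. For (ii)$\Rightarrow$(iv): given $f$, let $g$ be a weak cluster point of $\{A_nf\}$; since $T$ is weak-to-weak continuous and $(Id_E-T)A_nf\to 0$ in norm, passing to a subnet yields $Tg=g$, while $f-A_nf\in(Id_E-T)E$ by (c) and $\overline{(Id_E-T)E}$ is weakly closed (being a norm-closed subspace), so $f-g\in\overline{(Id_E-T)E}$; together with directness this gives (iv). For (iv)$\Rightarrow$(i): let $P$ be the projection onto $\Ker(Id_E-T)$ along $\overline{(Id_E-T)E}$; by (a) and (b), $A_nf\to Pf$ for every $f$, and $P$ is bounded, either by the closed graph theorem (both summands are closed) or by Banach--Steinhaus applied to the uniformly bounded family $\{A_n\}$.

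Then I would fold in (iii). For (iv)$\Rightarrow$(iii): if $T^{*}\nu=\nu$ with $\nu\ne 0$, then $\langle(Id_E-T)g,\nu\rangle=0$ for all $g$, so $\nu$ annihilates $\overline{(Id_E-T)E}$; by (iv) it cannot annihilate $\Ker(Id_E-T)$, which produces the required fixed vector. For (iii)$\Rightarrow$(iv) I would argue by contraposition: the subspace $E_0:=\{f:\{A_nf\}\ \text{converges}\}$ is closed by uniform boundedness of $\{A_n\}$, and the computation in (ii)$\Rightarrow$(iv) (now with norm limits) together with (a),(b) shows $E_0=\Ker(Id_E-T)\oplus\overline{(Id_E-T)E}$; if (iv) fails this is a proper closed subspace, so Hahn--Banach gives $\nu\ne 0$ with $\nu|_{E_0}=0$, and $\nu|_{\overline{(Id_E-T)E}}=0$ is exactly $T^{*}\nu=\nu$, while $\nu|_{\Ker(Id_E-T)}=0$ contradicts (iii).

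Finally, the ``furthermore''. Assume $T$ is mean ergodic, so $E=\Ker(Id_E-T)\oplus R$ with $R:=\overline{(Id_E-T)E}$; then $R$ is $T$-invariant, $S:=T|_{R}$ is power-bounded, and $A_n\to 0$ strongly on $R$. If moreover $(Id_E-T)E$ is closed, i.e.\ equals $R$, then $Id_R-S:R\to R$ is a continuous bijection (injective by directness, surjective since $(Id_E-T)f=(Id_E-T)f_1$ for the $R$-component $f_1$ of $f$), hence has a bounded inverse $V$ by the open mapping theorem; since $A_n|_{R}$ commutes with $Id_R-S$, this gives $A_n|_{R}=(Id_R-S)A_n|_{R}V=\frac1n(S-S^{n+1})V$, of norm $O(1/n)$, whence $\|A_n-P\|\to 0$. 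Conversely, if $A_n\to P$ in $\Lo(E)$, one first checks from $TP=P=A_nP$ that $P$ is the bounded projection with range $\Ker(Id_E-T)$ and kernel $R$; then $\|A_n|_{R}\|\to 0$, so fix $N$ with $\|A_N|_{R}\|<1$, and from $g-A_Ng=\frac1N\sum_{k=1}^{N}(Id_R-S)(Id_R+\dots+S^{k-1})g$ deduce $(1-\|A_N|_{R}\|)\|g\|\le\frac{C(N+1)}{2}\|(Id_R-S)g\|$, so $Id_R-S$ is bounded below and $(Id_E-T)E=(Id_R-S)R$ is closed. I expect this last converse --- extracting a quantitative lower bound for $Id_E-T$ on $\overline{(Id_E-T)E}$ out of the mere norm convergence of the averages --- to be the main obstacle, and I would give it the most care; everything else is bookkeeping around the identity $(Id_E-T)A_n=\frac1n(T-T^{n+1})$.
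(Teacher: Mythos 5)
Your proposal is correct, but note that the paper does not prove this statement at all: it is quoted as a known result with a pointer to Krengel's \emph{Ergodic theorems} (Chapter 2, Theorems 1.1, 1.3, 1.4 and 2.1, i.e.\ Yosida's mean ergodic theorem together with Lin's characterization of uniform mean ergodicity), so there is no in-paper argument to compare against. What you have written is essentially the standard textbook proof, organized cleanly around the identity $\left(Id_{E}-T\right)A_{n}=\frac{1}{n}\left(T-T^{n+1}\right)$: facts (a)--(c) give directness of the sum and the inclusion $\Ker\left(Id_{E}-T\right)\oplus\overline{\left(Id_{E}-T\right)E}\subset E_{0}$, the cycle (i)$\Rightarrow$(ii)$\Rightarrow$(iv)$\Rightarrow$(i) is handled correctly (the subnet argument in (ii)$\Rightarrow$(iv) is fine since norm-closed subspaces are weakly closed and a subnet of $\left\{A_{n}f\right\}$ has indices cofinal in $\N$), the Hahn--Banach contraposition for (iii)$\Leftrightarrow$(iv) is the classical one, and the ``furthermore'' direction via $\|A_{N}|_{R}\|<1$ giving a lower bound for $Id_{R}-S$ is exactly Lin's argument. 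The only blemish is cosmetic: your constant $\frac{C\left(N+1\right)}{2}$ tacitly assumes $C\ge 1$ (since $\|Id_{R}+S+\dots+S^{k-1}\|\le 1+\left(k-1\right)C$, the safe bound is $k\max\left(1,C\right)$), which does not affect the conclusion. So the work is sound; it simply supplies a proof the paper deliberately outsources to the literature.
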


Throughout this section $\mathbf{F}$ is a $1$-independent BSCF over a Hausdorff space $X$. For $\omega:X\to\C$ define $\omega_{n}=\frac{1}{n}\sum\limits_{k=1}^{n}\omega^{k}$. Note that $\omega_{n}\left(x\right)=1$ if $\omega\left(x\right)=1$, and $\omega_{n}\left(x\right)=\frac{\omega\left(x\right)\left(1-\omega\left(x\right)^{n}\right)}{n\left(1-\omega\left(x\right)\right)}$ otherwise. If $\|\omega\|_{\8}\le 1$ then $\omega_{n}\xrightarrow{n\to\8}\1_{\omega^{-1}\left(1\right)}$ pointwise; if $\left|\omega\left(x\right)\right|>1$, then $\left|\omega_{n}\left(x\right)\right|\xrightarrow{n\to\8}+\8$. It follows immediately from identities $M_{\omega}^{n}=M_{\omega^n}$ and $\frac{1}{n}\sum\limits_{k=1}^{n}M_{\omega}^{k}=M_{\omega_{n}}$ and Proposition \ref{hc} that if $M_{\omega}$ is a Cesaro bounded operator on a $1$-independent NSCF $\mathbf{F}$ over $X$, then $\|\omega\|_{\8}\le 1$.

Let us establish necessary conditions for MO's to be (uniformly) mean ergodic. Note that if $\omega$ is a multiplier of $\mathbf{F}$, then $\left.M_{\omega}\right|_{\mathbf{F}_{\omega^{-1}\left(1\right)}}=Id_{\mathbf{F}_{\omega^{-1}\left(1\right)}}$, and $M_{\omega}\mathbf{F}_{Y}\subset \mathbf{F}_{Y}$, for any $Y\subset X$.

\begin{theorem}\label{mem}
If $\mathbf{F}$ is $1$-independent and $\omega:X\to\C$ then:
\item[(i)] If $M_{\omega}$ is a mean ergodic operator on $\mathbf{F}$, then $\|\omega\|_{\8}\le 1$, $\omega^{-1}\left(1\right)$ is clopen, $\mathbf{F}\cong \mathbf{F}_{\omega^{-1}\left(1\right)}\oplus\mathbf{F}_{X\backslash\omega^{-1}\left(1\right)} $ and $\left.M_{1-\omega}\right|_{\mathbf{F}_{X\backslash\omega^{-1}\left(1\right)}}$ is injective and has a dense range.
\item[(ii)] If $M_{\omega}$ is uniformly mean ergodic, then additionally $\inf\limits_{x\in X\backslash \omega^{-1}\left(1\right)}\left|1-\omega\left(x\right)\right|>0$ and $\left.M_{1-\omega}\right|_{\mathbf{F}_{X\backslash\omega^{-1}\left(1\right)}}$ is a linear homeomorphism.
\end{theorem}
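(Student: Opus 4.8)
The plan is to apply the Mean Ergodic Theorem together with the structural observations recorded just before the statement. The key preliminary fact is that $\left.M_{\omega}\right|_{\mathbf{F}_{\omega^{-1}\left(1\right)}}$ is the identity, so $\mathbf{F}_{\omega^{-1}\left(1\right)}\subset \Ker\left(Id_{\mathbf{F}}-M_{\omega}\right)$; conversely if $M_{\omega}f=f$ then $\omega f = f$, hence $f$ vanishes off $\omega^{-1}\left(1\right)$ and so $f\in\mathbf{F}_{\omega^{-1}\left(1\right)}$. Thus $\Ker\left(Id_{\mathbf{F}}-M_{\omega}\right)=\mathbf{F}_{\omega^{-1}\left(1\right)}$ always, and similarly $\left(Id_{\mathbf{F}}-M_{\omega}\right)\mathbf{F}=M_{1-\omega}\mathbf{F}\subset \mathbf{F}_{X\backslash\omega^{-1}\left(1\right)}$, with $M_{1-\omega}$ mapping the invariant subspace $\mathbf{F}_{X\backslash\omega^{-1}\left(1\right)}$ into itself. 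These identifications will let me translate the abstract conclusions of the Mean Ergodic Theorem into the concrete statements about $\omega^{-1}\left(1\right)$ and about $\left.M_{1-\omega}\right|_{\mathbf{F}_{X\backslash\omega^{-1}\left(1\right)}}$.

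For part (i): mean ergodicity forces power-boundedness, hence Cesaro boundedness, hence $\|\omega\|_{\8}\le 1$ by the remark using Proposition \ref{hc}. By condition (iv) of the Mean Ergodic Theorem, $\mathbf{F}\cong\Ker\left(Id_{\mathbf{F}}-M_{\omega}\right)\oplus\overline{\left(Id_{\mathbf{F}}-M_{\omega}\right)\mathbf{F}} = \mathbf{F}_{\omega^{-1}\left(1\right)}\oplus\overline{M_{1-\omega}\mathbf{F}}$. I need to show $\overline{M_{1-\omega}\mathbf{F}}=\mathbf{F}_{X\backslash\omega^{-1}\left(1\right)}$ and that $\omega^{-1}\left(1\right)$ is clopen. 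Openness of $\omega^{-1}\left(1\right)$ should come from Lemma \ref{fp}: for $x\in\omega^{-1}\left(1\right)$ the projection $P$ onto $\mathbf{F}_{\omega^{-1}\left(1\right)}$ along the closure of the range is a mean ergodic projection with $PM_{\omega}=M_{\omega}P=P$; applying the point evaluation $x_{\mathbf{F}}\ne 0$ and using $1$-independence one produces $f=Pg$ with $f(x)\ne0$ and $M_{\omega}f=f$, i.e.\ $\omega f=f$ with $f(x)\ne 0$, so Lemma \ref{fp} gives $x\in\Int\,\omega^{-1}\left(1\right)$; closedness is just continuity of $\omega$ (Proposition \ref{hc}). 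Since $\omega^{-1}\left(1\right)$ is clopen, $\mathbf{F}=\mathbf{F}_{\omega^{-1}\left(1\right)}\oplus\mathbf{F}_{X\backslash\omega^{-1}\left(1\right)}$ algebraically and topologically (supports are separated by a clopen partition), and matching this with the Mean Ergodic decomposition identifies $\overline{M_{1-\omega}\mathbf{F}}$ with $\mathbf{F}_{X\backslash\omega^{-1}\left(1\right)}$. Restricting everything to the invariant complemented piece $\mathbf{F}_{X\backslash\omega^{-1}\left(1\right)}$, on which $M_\omega$ has no fixed points other than $0$, gives that $\left.M_{1-\omega}\right|_{\mathbf{F}_{X\backslash\omega^{-1}\left(1\right)}}$ is injective with dense range.

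For part (ii): uniform mean ergodicity is equivalent, by the last line of the Mean Ergodic Theorem, to mean ergodicity plus $\left(Id_{\mathbf{F}}-M_{\omega}\right)\mathbf{F}$ being closed. Combined with part (i) this means $M_{1-\omega}\mathbf{F}=\mathbf{F}_{X\backslash\omega^{-1}\left(1\right)}$, so $\left.M_{1-\omega}\right|_{\mathbf{F}_{X\backslash\omega^{-1}\left(1\right)}}$ is a continuous bijection of the Banach space $\mathbf{F}_{X\backslash\omega^{-1}\left(1\right)}$ onto itself, hence a linear homeomorphism by the Open Mapping Theorem. It remains to deduce $\delta:=\inf_{x\in X\backslash\omega^{-1}\left(1\right)}\left|1-\omega\left(x\right)\right|>0$. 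Here I would argue by contradiction: if $\delta=0$, pick $x_n\in X\backslash\omega^{-1}\left(1\right)$ with $\omega\left(x_n\right)\to 1$; using $1$-independence and rescaling choose $f_n\in\mathbf{F}_{X\backslash\omega^{-1}\left(1\right)}$ with $\|f_n\|=1$ but with $\|M_{1-\omega}f_n\|$ controlled by $\left|1-\omega\left(x_n\right)\right|$ times a bound coming from the embedding $J_{\mathbf{F}}$ into $\Co\left(X\right)$—more carefully, one uses that the inverse of $\left.M_{1-\omega}\right|_{\mathbf{F}_{X\backslash\omega^{-1}\left(1\right)}}$ is $\left.M_{1/(1-\omega)}\right|$, which would then be an unbounded symbol generating a bounded operator, contradicting $\|\tfrac{1}{1-\omega}\|_{\8}\le\|M_{1/(1-\omega)}\|$ from Proposition \ref{hc}. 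The main obstacle I anticipate is this last point: producing, from blow-up of $\tfrac{1}{1-\omega}$ near the sequence $\{x_n\}$, genuine elements of $\mathbf{F}_{X\backslash\omega^{-1}\left(1\right)}$ witnessing unboundedness of the inverse. The clean way is to observe that the inverse operator, being a bounded operator on a $1$-independent NSCF whose adjoint fixes each $x_{\mathbf{F}}$ up to the scalar $\tfrac{1}{1-\omega\left(x\right)}$, is itself a multiplication operator with symbol $\tfrac{1}{1-\omega}$, so Proposition \ref{hc} forces $\sup_{x\notin\omega^{-1}\left(1\right)}\tfrac{1}{\left|1-\omega\left(x\right)\right|}<\infty$, which is exactly $\delta>0$.
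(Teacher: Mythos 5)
Your overall strategy matches the paper's: derive $\|\omega\|_{\infty}\le 1$ from blow-up of the Ces\`aro means, produce for each $x\in\omega^{-1}\left(1\right)$ a fixed function of $M_{\omega}$ not vanishing at $x$ (your mean ergodic projection argument is equivalent to invoking condition (iii) of the Mean Ergodic Theorem with $\nu=x_{\mathbf{F}}$), feed it to Lemma \ref{fp}, use condition (iv) for the decomposition, and in (ii) identify the inverse as $M_{\frac{1}{1-\omega}}$ via Proposition \ref{rec} and bound its symbol by Proposition \ref{hc} --- this last part is exactly the paper's argument. However, one step is justified incorrectly. You assert that clopenness of $\omega^{-1}\left(1\right)$ by itself gives $\mathbf{F}=\mathbf{F}_{\omega^{-1}\left(1\right)}\oplus\mathbf{F}_{X\backslash\omega^{-1}\left(1\right)}$ ``algebraically and topologically (supports are separated by a clopen partition)'' and then ``match'' this with the ergodic decomposition. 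That implication is false: take $X$ a two-point discrete space and $\mathbf{F}$ the constant functions; this is a $1$-independent BSCF, $X$ is partitioned into two clopen singletons, yet $\mathbf{F}_{\left\{1\right\}}=\mathbf{F}_{\left\{2\right\}}=\left\{0\right\}$ while $\mathbf{F}\ne\left\{0\right\}$. The direct sum is a consequence of mean ergodicity, not of topology: condition (iv) gives $\mathbf{F}=\Ker\left(Id_{\mathbf{F}}-M_{\omega}\right)\oplus\overline{M_{1-\omega}\mathbf{F}}$; you identify $\Ker\left(Id_{\mathbf{F}}-M_{\omega}\right)=\mathbf{F}_{\omega^{-1}\left(1\right)}$ and $\overline{M_{1-\omega}\mathbf{F}}\subset\mathbf{F}_{X\backslash\omega^{-1}\left(1\right)}$ (both of which you already recorded in your first paragraph); and then the trivial intersection $\mathbf{F}_{\omega^{-1}\left(1\right)}\cap\mathbf{F}_{X\backslash\omega^{-1}\left(1\right)}=\left\{0\right\}$ forces $\overline{M_{1-\omega}\mathbf{F}}=\mathbf{F}_{X\backslash\omega^{-1}\left(1\right)}$, which yields the stated decomposition. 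The fix is a reordering of things you already have, but as written the step does not stand.

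A second, smaller slip: ``mean ergodicity forces power-boundedness'' is not true in general. What you actually need, and what is true by the uniform boundedness principle applied to the strongly convergent Ces\`aro averages, is that mean ergodicity forces Ces\`aro boundedness; this already yields $\|\omega\|_{\infty}\le 1$ by the observation preceding the theorem (alternatively, the paper argues directly that $\left|\omega_{n}\left(x\right)f\left(x\right)\right|\to\infty$ whenever $\left|\omega\left(x\right)\right|>1$ and $f\left(x\right)\ne 0$, contradicting convergence of $\frac{1}{n}\sum_{k=1}^{n}M_{\omega}^{k}f$). The remainder of part (ii) --- closed range plus dense range plus injectivity gives a bijection of $\mathbf{F}_{X\backslash\omega^{-1}\left(1\right)}$ onto itself, the Open Mapping Theorem gives a bounded inverse, and that inverse is a multiplication operator with symbol $\frac{1}{1-\omega}$, so $\inf_{x\in X\backslash\omega^{-1}\left(1\right)}\left|1-\omega\left(x\right)\right|>0$ --- is correct and coincides with the paper's proof.
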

\begin{proof}
First, let us show $\|\omega\|_{\8}\le 1$. Assume that $\left|\omega\left(x\right)\right|>1$. Let $f\in \mathbf{F}$ be such that $f\left(x\right)\ne 0$. Then $\left|\left[\frac{1}{n}\sum\limits_{k=1}^{n}M_{\omega}^{k}f\right]\left(x\right)\right|=\left|\omega_{n}\left(x\right)f\left(x\right)\right|\xrightarrow{n\to\8}+\8$, which contradicts the fact that the sequence $\left\{\frac{1}{n}\sum\limits_{k=1}^{n}M_{\omega}^{k}f\right\}_{n\in\N}$ converges in $\mathbf{F}$.

Since $\omega$ is continuous due to Proposition \ref{hc} it follows that $\omega^{-1}\left(1\right)$ is closed. Assume that $\omega\left(x\right)=1$. Then, $M_{\omega}^{*}x_{\mathbf{F}}=x_{\mathbf{F}}$, and so from the Mean Ergodic Theorem there is $f\in \mathbf{F}$ such that $M_{\omega}f=f$ and $f\left(x\right)=\left<f,x_{\mathbf{F}}\right>\ne 0$. Hence, from Lemma \ref{fp} the set $\omega^{-1}\left(1\right)$ is open, and therefore clopen. Additionally, $\left.M_{1-\omega}\right|_{\mathbf{F}_{X\backslash\omega^{-1}\left(1\right)}}$ is injective since $1-\omega$ does not vanish on $X\backslash\omega^{-1}\left(1\right)$.

Next, Mean Ergodic Theorem implies that $\mathbf{F}$ is decomposed as $\Ker \left(Id_{\mathbf{F}}-M_{\omega}\right) \oplus \overline{\left(Id_{\mathbf{F}}-M_{\omega}\right)\mathbf{F}}$. Since $Id_{\mathbf{F}}-M_{\omega}=M_{1-\omega}$ it follows that $\Ker \left(Id_{\mathbf{F}}-M_{\omega}\right)=\mathbf{F}_{\omega^{-1}\left(1\right)}$, while $\overline{\left(Id_{\mathbf{F}}-M_{\omega}\right)\mathbf{F}}\subset \mathbf{F}_{X\backslash\omega^{-1}\left(1\right)}$. Finally, as $\mathbf{F}_{\omega^{-1}\left(1\right)}\cap \mathbf{F}_{X\backslash\omega^{-1}\left(1\right)}=\left\{0\right\}$, we conclude that $\mathbf{F}\cong \mathbf{F}_{\omega^{-1}\left(1\right)}\oplus\mathbf{F}_{X\backslash\omega^{-1}\left(1\right)} $ and $\overline{M_{1-\omega}\mathbf{F}}=\overline{M_{1-\omega}\mathbf{F}_{X\backslash\omega^{-1}\left(1\right)}}=\mathbf{F}_{X\backslash\omega^{-1}\left(1\right)}$ .\medskip

If $M_{\omega}$ is uniformly mean ergodic, it is mean ergodic, and so replacing $X$ with a clopen set $X\backslash\omega^{-1}\left(1\right)$, and $\mathbf{F}$ with $\mathbf{F}_{X\backslash\omega^{-1}\left(1\right)}$ we may assume that $Id_{\mathbf{F}}-M_{\omega}=M_{1-\omega}$ is injective on $\mathbf{F}$ with a dense range. Moreover, from the Mean Ergodic Theorem $Id_{\mathbf{F}}-M_{\omega}$ has a closed range, and so $M_{1-\omega}$ is a linear homeomorphism. As $M_{1-\omega}^{-1}=M_{\frac{1}{1-\omega}}$ it follows that $\frac{1}{1-\omega}$ is a bounded function, from where $\inf\limits_{x\in X}\left|1-\omega\left(x\right)\right|>0$.
\end{proof}

\begin{corollary}\label{cum} Assume that $X$ is connected and $\mathbf{F}$ is $1$-independent. If $\omega:X\to\C$ is a non-constant function such that $M_{\omega}$ is a (uniformly) mean ergodic operator on $\mathbf{F}$, then $\|\omega\|_{\8}\le 1$, $1\not\in \omega\left(X\right)$, ($1\not\in \overline{\omega\left(X\right)}$) and  $M_{1-\omega}$ is injective and has a dense range (is a linear homeomorphism). Furthermore, $\frac{1}{n}\sum\limits_{k=1}^{n}M_{\omega}^{k}$ converges to $0$ in the strong operator (norm) topology.
\end{corollary}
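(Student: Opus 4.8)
The plan is to read the whole statement off Theorem \ref{mem}, using connectedness of $X$ to collapse the clopen set $\omega^{-1}\left(1\right)$ and the hypothesis that $\omega$ is non-constant to exclude the degenerate alternative. Suppose first that $M_{\omega}$ is mean ergodic. Theorem \ref{mem}(i) already yields $\|\omega\|_{\8}\le 1$ and that $\omega^{-1}\left(1\right)$ is clopen; since $X$ is connected this set is either $\varnothing$ or $X$, and $\omega^{-1}\left(1\right)=X$ would mean $\omega\equiv 1$, contradicting non-constancy. Hence $\omega^{-1}\left(1\right)=\varnothing$, i.e. $1\notin\omega\left(X\right)$, so that $\mathbf{F}_{\omega^{-1}\left(1\right)}=\left\{0\right\}$ and $\mathbf{F}_{X\backslash\omega^{-1}\left(1\right)}=\mathbf{F}$; the remaining assertions of Theorem \ref{mem}(i) then say exactly that $M_{1-\omega}=Id_{\mathbf{F}}-M_{\omega}$ is injective with dense range on $\mathbf{F}$. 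If moreover $M_{\omega}$ is uniformly mean ergodic, it is in particular mean ergodic, so the same dichotomy applies, and substituting $\omega^{-1}\left(1\right)=\varnothing$ into Theorem \ref{mem}(ii) turns its conclusions into $\inf_{x\in X}\left|1-\omega\left(x\right)\right|>0$ — equivalently $1\notin\overline{\omega\left(X\right)}$ — and into $M_{1-\omega}$ being a linear homeomorphism of $\mathbf{F}$.

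It remains to handle the Cesàro averages $\frac{1}{n}\sum_{k=1}^{n}M_{\omega}^{k}=M_{\omega_{n}}$. Mean ergodicity provides $P\in\Lo\left(\mathbf{F}\right)$ with $M_{\omega_{n}}f\to Pf$ for every $f$, and by the Mean Ergodic Theorem $P$ is the projection onto $\Ker\left(Id_{\mathbf{F}}-M_{\omega}\right)=\mathbf{F}_{\omega^{-1}\left(1\right)}=\left\{0\right\}$, so $P=0$. Alternatively, without invoking that theorem: since $\|\omega\|_{\8}\le 1$ and $1\notin\omega\left(X\right)$ we have $\omega_{n}\to\1_{\omega^{-1}\left(1\right)}=0$ pointwise on $X$, hence $M_{\omega_{n}}f\to Pf$ both in $\mathbf{F}$ and — through the continuous embedding $\mathbf{F}\hookrightarrow\Co\left(X\right)$ — pointwise, which forces $Pf$ to be the zero function and therefore $Pf=0_{\mathbf{F}}$. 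Either way $P=0$, so $\frac{1}{n}\sum_{k=1}^{n}M_{\omega}^{k}\to 0$ in the strong operator topology, and in the uniformly mean ergodic case the convergence takes place in operator norm.

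There is essentially no obstacle here: the corollary is bookkeeping on top of Theorem \ref{mem}. The two points deserving attention are that the non-constancy of $\omega$ is used exactly once — to rule out $\omega^{-1}\left(1\right)=X$ — and that the NSCF structure ($1$-independence together with continuity of the embedding into $\Co\left(X\right)$) is what lets us identify the mean ergodic projection with $0$; after that, matching the parenthetical versions of the conclusion with parts (i) and (ii) of Theorem \ref{mem} is automatic.
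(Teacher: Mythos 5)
Your proposal is correct and follows exactly the route the paper intends: the corollary is stated without proof as an immediate consequence of Theorem \ref{mem}, and your derivation (connectedness plus non-constancy forces $\omega^{-1}\left(1\right)=\varnothing$, which collapses the decomposition so that $\mathbf{F}_{\omega^{-1}\left(1\right)}=\left\{0\right\}$ and the mean ergodic projection is $0$) is the intended bookkeeping. No gaps.
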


\begin{remark}\label{mem2}Note that every weakly compact power-bounded operator satisfies the condition (ii) of the Mean Ergodic theorem, and so it is mean ergodic. Hence, if $\|M_{\omega}\|\le 1$ it is mean ergodic, and so $\omega^{-1}\left(1\right)$ is clopen, from part (i) of Theorem \ref{mem}. This is an alternative proof of Theorem \ref{mp}.
\qed\end{remark}

\begin{remark}\label{mem1}The converse of Theorem \ref{mem} is the following: if $M_{\omega}$ is a power-bounded operator on $\mathbf{F}$, $\mathbf{F}\cong \mathbf{F}_{\omega^{-1}\left(1\right)}\oplus\mathbf{F}_{X\backslash\omega^{-1}\left(1\right)} $, and $\left.M_{1-\omega}\right|_{\mathbf{F}_{X\backslash\omega^{-1}\left(1\right)}}$ has a dense range (is a surjection), then $M_{\omega}$ is (uniformly) mean ergodic.
\qed\end{remark}

Note that unlike the mean ergodicity the condition of uniform mean ergodicity of an operator on $\mathbf{F}$ only depends on the properties of the operator as an element of $\Lo\left(\mathbf{F}\right)$ with no reference to $\mathbf{F}$ itself. Hence, it is plausible to expect a simple characterization of uniform mean ergodicity in the NSCF's whose multiplier algebra has an explicit topology. Let us consider a class of such NSCF's. We will say that $\mathbf{F}$ has a \emph{quasi-monotone} norm if there is $\alpha\ge 1$ such that if $\|f\|\le\alpha\|g\|$ for every $f,g\in \mathbf{F}$ with $\left|f\right|\le\left|g\right|$. It is easy to see that in this case $\|M_{\omega}\|\le \alpha\|\omega\|_{\8}$, for every multiplier $\omega$ of $\mathbf{F}$. If moreover $\|\omega\|_{\8}\le 1$, then $M_{\omega}$ is power-bounded. Another useful property is given in the following proposition.

\begin{proposition}\label{alpha}If $\mathbf{F}$ has a quasi-monotone norm, then the set of its bounded continuous multipliers is closed in $\Co_{\8}\left(X\right)$.
\end{proposition}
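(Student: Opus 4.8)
The plan is to show that if $\{\omega_j\}$ is a sequence of bounded continuous multipliers of $\mathbf{F}$ converging uniformly to some $\omega \in \Co_\8(X)$, then $\omega$ is itself a multiplier of $\mathbf{F}$; boundedness is automatic once $\omega$ is a multiplier by Proposition \ref{hc}. The key observation is the quasi-monotone estimate $\|M_\eta\| \le \alpha\|\eta\|_\8$ applied to the differences $\eta = \omega_j - \omega_k$: this tells us that $\{M_{\omega_j}\}$ is a Cauchy sequence in the operator norm of $\Lo(\mathbf{F})$, hence converges to some $T \in \Lo(\mathbf{F})$. It remains only to identify $T$ as $M_\omega$.

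First I would fix $f \in \mathbf{F}$ and note that $M_{\omega_j} f \to Tf$ in the norm of $\mathbf{F}$, hence (since $\mathbf{F}$ is a NSCF) also in $\Co(X)$, and in particular pointwise on $X$. On the other hand, $[M_{\omega_j}f](x) = \omega_j(x)f(x) \to \omega(x)f(x)$ for every $x \in X$, because $\omega_j \to \omega$ uniformly and hence pointwise. Comparing the two limits gives $[Tf](x) = \omega(x)f(x)$ for every $x$, i.e. $Tf = M_\omega f$. Since $f$ was arbitrary, $M_\omega = T \in \Lo(\mathbf{F})$, so $\omega$ is a multiplier of $\mathbf{F}$; by Proposition \ref{hc} it is continuous and $\|\omega\|_\8 \le \|M_\omega\| < \infty$, so it lies in the set of bounded continuous multipliers. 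This proves that set is closed in $\Co_\8(X)$.

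I do not expect a serious obstacle here: the only thing to be careful about is the direction of the estimates — we need the quasi-monotone inequality to control $\|M_{\omega_j} - M_{\omega_k}\| = \|M_{\omega_j - \omega_k}\|$ by $\alpha\|\omega_j - \omega_k\|_\8$, and this requires that the difference of two multipliers is again a multiplier with the expected symbol, which is immediate from linearity of $\omega \mapsto M_\omega$. One should also record that the limit $\omega$ is automatically bounded once we know it is a multiplier, so membership in $\Co_\8(X)$ (as opposed to merely in $\Co(X)$) is not an extra hypothesis to verify but a consequence. If one prefers to avoid invoking completeness of $\Lo(\mathbf{F})$, the same argument can be run by first checking directly that $M_\omega$ is bounded — $\|M_\omega f\| = \lim_j \|M_{\omega_j} f\| \le \limsup_j \alpha\|\omega_j\|_\8 \|f\|$ — and then that it maps $\mathbf{F}$ into $\mathbf{F}$ via the pointwise-limit identification above.
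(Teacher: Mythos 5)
Your argument is correct and essentially the same as the paper's: both hinge on the quasi-monotone estimate $\|M_{\eta}\|\le\alpha\|\eta\|_{\8}$ for multipliers $\eta$ to get a Cauchy sequence (the paper applies it to $\{\omega_{j}f\}$ for each fixed $f\in\mathbf{F}$, which is exactly the ``avoid completeness of $\Lo\left(\mathbf{F}\right)$'' variant you sketch at the end), and both identify the limit as $\omega f$ via pointwise convergence using the NSCF property. The only cosmetic difference is that you phrase the Cauchy argument in $\Lo\left(\mathbf{F}\right)$ rather than in $\mathbf{F}$, and your closing remark about boundedness is unnecessary since the limit is assumed to lie in $\Co_{\8}\left(X\right)$ from the start.
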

\begin{proof}
Let $\left\{\omega_{n}\right\}_{n\in\N}$ be a sequence of bounded continuous functions that are multipliers of $\mathbf{F}$ and let $\omega\in \Co_{\8}\left(X\right)$ be such that $\|\omega-\omega_{n}\|_{\8}\xrightarrow{n\to\8} 0$. Let $f\in\mathbf{F}$. Since the multiplier norm does not exceed $\alpha\|\cdot\|_{\8}$, for some $\alpha\ge 1$, it follows that $\left\{\omega_{n}f\right\}_{n\in\N}\subset\mathbf{F}$ is a Cauchy sequence, and so there is $g\in\mathbf{F}$ such that $\omega_{n}f\xrightarrow{n\to\8} g$ in $\mathbf{F}$. Since the latter is a NSCF it follows that $g=\omega f$, and since $f$ was chosen arbitrarily, we conclude that $\omega\in Mult\left(\mathbf{F}\right)$.
\end{proof}

Now we can state a sufficient condition for uniform mean ergodicity of a MO on a BSCF with a quasi-monotone norm.

\begin{proposition}\label{un}
Assume that $\mathbf{F}$ has a quasi-monotone norm and let $\omega$ be a continuous multiplier of $\mathbf{F}$. If $\|\omega\|_{\8}\le 1$ and $\inf\limits_{x\in X\backslash \omega^{-1}\left(1\right)}\left|1-\omega\left(x\right)\right|>0$, then $M_{\omega}$ is uniformly mean ergodic.
\end{proposition}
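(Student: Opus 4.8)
The plan is to prove uniform mean ergodicity directly, by identifying the limit operator $P$ as a multiplication operator and showing that the Cesàro averages $\frac{1}{n}\sum_{k=1}^{n}M_{\omega}^{k}=M_{\omega_{n}}$ converge to it in $\Lo\left(\mathbf{F}\right)$. Put $Y=\omega^{-1}\left(1\right)$ and $\delta=\inf_{x\in X\backslash Y}\left|1-\omega\left(x\right)\right|>0$ (if $Y=X$ there is nothing to prove, since then $M_{\omega}=Id_{\mathbf{F}}$). I would first record that each $\omega_{n}=\frac{1}{n}\sum_{k=1}^{n}\omega^{k}$ is a bounded continuous multiplier of $\mathbf{F}$: it is a polynomial in $\omega$, it is continuous since $\omega$ is, and $\|\omega_{n}\|_{\8}\le 1$ because $\|\omega\|_{\8}\le 1$; moreover $M_{\omega}$ is power-bounded by the observation preceding Proposition \ref{alpha} (although this will not actually be needed).

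The key step is the claim that $\omega_{n}\to\1_{Y}$ in $\Co_{\8}\left(X\right)$. On $Y$ one has $\omega_{n}\equiv 1=\1_{Y}$. On $X\backslash Y$ one uses the closed form $\omega_{n}\left(x\right)=\frac{\omega\left(x\right)\left(1-\omega\left(x\right)^{n}\right)}{n\left(1-\omega\left(x\right)\right)}$ together with the bounds $\left|\omega\left(x\right)\right|\le 1$, $\left|1-\omega\left(x\right)^{n}\right|\le 2$ and $\left|1-\omega\left(x\right)\right|\ge\delta$ to get $\left|\omega_{n}\left(x\right)-\1_{Y}\left(x\right)\right|=\left|\omega_{n}\left(x\right)\right|\le\frac{2}{n\delta}$. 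Hence $\|\omega_{n}-\1_{Y}\|_{\8}\le\frac{2}{n\delta}\to 0$, so in particular $\1_{Y}$ is a continuous (bounded) function — equivalently $Y$ is clopen, which recovers the necessary condition from Theorem \ref{mem}(ii) — and thus $\1_{Y}\in\Co_{\8}\left(X\right)$.

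Since each $\omega_{n}$ is a bounded continuous multiplier of $\mathbf{F}$ and $\omega_{n}\to\1_{Y}$ in $\Co_{\8}\left(X\right)$, Proposition \ref{alpha} yields that $\1_{Y}$ is itself a bounded continuous multiplier, so $M_{\1_{Y}}\in\Lo\left(\mathbf{F}\right)$. Letting $\alpha\ge 1$ be a quasi-monotonicity constant for $\mathbf{F}$, we get $\|M_{\omega_{n}}-M_{\1_{Y}}\|=\|M_{\omega_{n}-\1_{Y}}\|\le\alpha\|\omega_{n}-\1_{Y}\|_{\8}\le\frac{2\alpha}{n\delta}\to 0$, i.e. $\frac{1}{n}\sum_{k=1}^{n}M_{\omega}^{k}\to M_{\1_{Y}}$ in $\Lo\left(\mathbf{F}\right)$; thus $M_{\omega}$ is uniformly mean ergodic with $P=M_{\1_{Y}}$. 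The only step that is not purely formal is the passage from the continuous bounded function $\1_{Y}$ to a genuine multiplier of $\mathbf{F}$: this is exactly where the quasi-monotonicity of the norm (through Proposition \ref{alpha}) enters essentially, and it is also why one must first see that $Y=\omega^{-1}\left(1\right)$ is clopen so that $\1_{Y}$ is continuous in the first place. Everything else is the geometric-series estimate above.
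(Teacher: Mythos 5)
Your proof is correct and follows essentially the same route as the paper: the geometric-series estimate $\|\omega_{n}-\1_{Y}\|_{\8}\le \frac{2}{n\delta}$, Proposition \ref{alpha} to upgrade $\1_{Y}$ to a multiplier, and quasi-monotonicity to pass to convergence in $\Lo\left(\mathbf{F}\right)$. The only cosmetic difference is that you deduce that $Y$ is clopen from the uniform convergence of continuous functions, whereas the paper observes directly that $Y=\omega^{-1}\left(B\left(1,\delta\right)\right)$ is open.
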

\begin{proof}
Let $U=\omega^{-1}\left(1\right)$, which is closed, and let $\beta=\inf\limits_{x\in X\backslash U}\left|1-\omega\left(x\right)\right|>0$. Then $U=\omega^{-1}\left(B\left(1,\beta\right)\right)$, and so $U$ is open and therefore clopen. Hence, $\1_{U}$ is continuous.

For $n\in\N$ we have $\omega_{n}\left(x\right)-\1_{U}\left(x\right)=0$ if $x\in U$, and $\omega_{n}\left(x\right)-\1_{U}\left(x\right)=\frac{\omega\left(x\right)}{1-\omega\left(x\right)}\frac{1-\omega\left(x\right)^{n}}{n}$, if $x\in X\backslash U$. In both cases $\left|\omega_{n}\left(x\right)-\1_{U}\left(x\right)\right|\le \frac{1}{\beta}\frac{2}{n}\xrightarrow{n\to\8} 0$, and so $\omega_{n}\xrightarrow{n\to\8}\1_{U}$ in $\Co_{\8}\left(X\right)$. Then, from Proposition \ref{alpha} $\1_{U}$ is a multiplier of $\mathbf{F}$. Since $\frac{1}{n}\sum\limits_{k=1}^{n}M_{\omega}^{k}=M_{\omega_{n}}$, and the norm is quasi-monotone
we conclude that $\frac{1}{n}\sum\limits_{k=1}^{n}M_{\omega}^{k}\xrightarrow{n\to\8} M_{\1_{U}}$ in $\Lo\left(\mathbf{F}\right)$. Thus, $M_{\omega}$ is uniformly mean ergodic.
\end{proof}

In the case when $X$ is compact and $\omega^{-1}\left(1\right)$ is clopen, $X\backslash \omega^{-1}\left(1\right)$ is compact, and so $\inf\limits_{x\in X\backslash \omega^{-1}\left(1\right)}\left|1-\omega\left(x\right)\right|>0$ automatically. Hence, we get the following equivalence.

\begin{corollary}\label{unc}
Assume that $X$ is compact and $\mathbf{F}$ is $1$-independent and has a quasi-monotone norm. Then MO on $\mathbf{F}$ is uniformly mean ergodic if and only if it is mean ergodic.
\end{corollary}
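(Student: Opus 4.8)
The plan is to obtain the nontrivial (``mean ergodic $\Rightarrow$ uniformly mean ergodic'') implication by combining Theorem~\ref{mem}(i) with Proposition~\ref{un}, using compactness of $X$ to convert the clopenness of $\omega^{-1}\left(1\right)$ into the quantitative separation condition $\inf_{x\in X\backslash\omega^{-1}\left(1\right)}\left|1-\omega\left(x\right)\right|>0$.

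First I would dispose of the easy direction: if $M_{\omega}$ is uniformly mean ergodic then its Ces\`aro means converge in $\Lo\left(\mathbf{F}\right)$, hence strongly, so it is mean ergodic. For the converse, assume $M_{\omega}$ is mean ergodic on $\mathbf{F}$. Since $\mathbf{F}$ is $1$-independent, Proposition~\ref{hc} gives continuity of $\omega$, and Theorem~\ref{mem}(i) gives $\|\omega\|_{\8}\le 1$ together with clopenness of $U:=\omega^{-1}\left(1\right)$. Next I would invoke compactness of $X$: the set $X\backslash U$ is closed, hence compact; if it is empty then $\omega\equiv 1$ and $M_{\omega}=Id_{\mathbf{F}}$ is trivially uniformly mean ergodic, so assume $X\backslash U\ne\varnothing$. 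On this compact set the continuous function $\left|1-\omega\right|$ is strictly positive, so it attains a positive minimum, i.e.\ $\inf_{x\in X\backslash U}\left|1-\omega\left(x\right)\right|>0$. Now $\omega$ is a continuous multiplier of $\mathbf{F}$ with $\|\omega\|_{\8}\le 1$ satisfying this separation condition, and $\mathbf{F}$ has a quasi-monotone norm, so Proposition~\ref{un} yields that $M_{\omega}$ is uniformly mean ergodic, completing the argument.

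I do not expect a real obstacle here: the proof is essentially an assembly of already-established results. The only points requiring attention are the degenerate case $X=\omega^{-1}\left(1\right)$, where $M_{\omega}$ is the identity, and checking that ``$\omega$ is a multiplier'' is part of the standing hypothesis on an MO on $\mathbf{F}$, so that Proposition~\ref{hc} and Proposition~\ref{un} are indeed applicable.
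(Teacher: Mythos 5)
Your proof is correct and follows exactly the paper's route: the paper's entire argument is the sentence preceding the corollary, namely that compactness of $X\backslash\omega^{-1}\left(1\right)$ (clopen by Theorem~\ref{mem}(i)) forces $\inf_{x\in X\backslash\omega^{-1}\left(1\right)}\left|1-\omega\left(x\right)\right|>0$, after which Proposition~\ref{un} applies. Your handling of the degenerate case $X=\omega^{-1}\left(1\right)$ is a harmless extra precaution.
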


\begin{example}\label{una}
Assume that $X$ is a bounded domain in $\C^{n}$. Since every MO on $A\left(X\right)$ is also a MO on $A\left(\overline{X}\right)$, and the latter is a $1$-independent BSCF over a compact space $\overline{X}$ with a quasi-monotone norm, every mean ergodic MO on $A\left(X\right)$ is in fact uniformly mean ergodic. The same argument works for any uniform algebra (see \cite{gamelin}).
\qed\end{example}

\begin{remark}\label{wme}
If $X$ is connected, and $\mathbf{F}$ has a quasi-monotone norm and is such that every mean ergodic MO is uniformly mean ergodic, then there is no weakly compact MO's on $\mathbf{F}$ other than possibly $\lambda Id_{\mathbf{F}}$, $\lambda\in\C$. If $\omega$ is non-constant and such that $M_{\omega}$ is weakly compact, then $M_{\frac{\omega}{\lambda \|\omega\|_{\8}}}$ is power-bounded due to quasi-monotonicity of the norm, for every $\lambda\in\T$. Moreover, $M_{\frac{\omega}{\lambda \|\omega\|_{\8}}}$ is mean ergodic, according to Remark \ref{mem2}, and so it is uniformly mean ergodic, from our assumption. From Corollary \ref{cum} therefore $\lambda\|\omega\|_{\8}\not\in \overline{\omega\left(X\right)}$, and since $\lambda$ was chosen arbitrary, we get that  $\|\omega\|_{\8}<\|\omega\|_{\8}$. Contradiction.
\qed\end{remark}

Let us now discuss some sufficient conditions for mean ergodicity. Assume that $\|\omega\|_{\8}\le1$. We know that $\frac{1}{n}\sum\limits_{k=1}^{n}M_{\omega}^{k}f$ converges pointwise to $\1_{\omega^{-1}\left(1\right)}f$, for every $f\in \mathbf{F}$. Hence, if additionally $M_{\omega}$ is power-bounded, $\omega^{-1}\left(1\right)$ is clopen, and every pointwise convergent bounded sequence in $\mathbf{F}$ converges weakly, $M_{\omega}$ satisfies the condition (ii) of the Mean Ergodic theorem, and so is mean ergodic. Let us explore how to guarantee these conditions in a tangible way. First, recall that if $\mathbf{F}$ has a quasi-monotone norm, then $M_{\omega}$ is power-bounded. It is also easy to see that if $\mathbf{F}$ is an algebra with submultiplicative norm, the power-boundedness follows from $\|\omega\|\le 1$. Applying Grothendieck completion theorem (see \cite[III.6, Theorem 1]{bourbaki}) to $\mathbf{F}$ with pointwise topology and the collection of closed bounded sets in $\mathbf{F}$, it follows that pointwise and weak topologies coincide on bounded subsets of $\mathbf{F}$ if and only if $\mathbf{F}^{*}=\overline{\spa\left\{x_{\mathbf{F}},~x\in X\right\}}$. In turn, this happens whenever $\mathbf{F}^{**}$ is a BSCF such that $J_{\mathbf{F}^{**}}=J_{\mathbf{F}}^{**}$. Indeed, injectivity of $J_{\mathbf{F}}^{**}$ can be expressed by  $\left\{0_{\mathbf{F}^{**}}\right\}=\Ker J_{\mathbf{F}}^{**}=\left\{x_{\mathbf{F}},~x\in X\right\}^{\bot}$, which  (in the duality of $\mathbf{F}^{*}$ and $\mathbf{F}^{**}$) is equivalent to $$\mathbf{F}^{*}=\left\{0_{\mathbf{F}^{**}}\right\}^{\bot}=\left\{x_{\mathbf{F}},~x\in X\right\}^{\bot\bot}=\overline{\spa\left\{x_{\mathbf{F}},~x\in X\right\}}.$$

\begin{example}
If $v$ is continuous on $X$, the spaces $\Co^{0}_{v}$ considered in \cite{bjr} are BSCF's with quasi-monotone norms, and moreover every pointwise convergent bounded sequence is weakly convergent. Hence, $M_{\omega}$ is mean ergodic on $\Co^{0}_{v}$ as soon as $\|\omega\|_{\8}\le1$ and $\omega^{-1}\left(1\right)$ is clopen. Similarly, if $X$ is a domain in $\C^{n}$, the spaces $\Ho^{0}_{v}$ considered in \cite{br} are BSCF's with quasi-monotone norms, and moreover under some mild conditions $\left(\Ho^{0}_{v}\right)^{**}=\Ho^{\8}_{v}$ with $J_{\Ho^{\8}_{v}}=J_{\Ho^{0}_{v}}^{**}$ (see \cite{bor} and the reference therein). Hence, in this case $M_{\omega}$ is mean ergodic on $\Ho^{0}_{v}$ as soon as $\omega\in\Ho\left(X\right)$ satisfies $\|\omega\|_{\8}\le1$.
\qed\end{example}

Note that if $\mathbf{F}$ is an algebra, then every element of $\mathbf{F}$ is a multiplier of $\mathbf{F}$. It is possible to show that the set of all multipliers of a BSCF is itself a BSCF, and so from the Closed Graph theorem, $\mathbf{F}$ is continuously included into the space of its multipliers. It is easy to see that the norm $|||\cdot|||=\frac{1}{\alpha}\|\cdot\|$ on $\mathbf{F}$ is submultiplicative, where $\alpha$ be the norm of the inclusion. Using this observation and similar arguments as in the preceding example one can obtain a sufficient condition of mean ergodicity of MO's on Little Lipschitz spaces (see \cite{weaver}).\medskip

Let us conclude this article by recovering a result from \cite{bjr}. We will need the following auxiliary fact.

\begin{lemma}
Let $\mathbf{F}$ be a $1$-independent NSCF over $X$. If $\left\{x^{n}\right\}_{n\in\N}\subset X$, then $F=\left\{f\in \mathbf{F}, \lim\limits_{n\to\8}\frac{f\left(x^{n}\right)}{\|x^{n}_{\mathbf{F}}\|}=0\right\}$ is a closed subspace of $\mathbf{F}$. Moreover, if $\omega:X\to\C$ is a multiplier of $\mathbf{F}$ such that $\lim\limits_{n\to\8}\omega\left(x^{n}\right)=0$, then $M_{\omega}\mathbf{F}\subset F$.
\end{lemma}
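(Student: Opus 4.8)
The plan is to establish the two assertions in turn, both resting on the single elementary fact that for every $x\in X$ and $f\in\mathbf{F}$ one has $\left|f\left(x\right)\right|=\left|x_{\mathbf{F}}\left(f\right)\right|\le\|x_{\mathbf{F}}\|\|f\|$, together with the observation that $1$-independence of $\mathbf{F}$ forces $\|x^{n}_{\mathbf{F}}\|>0$, so that the quotients defining $F$ are well posed.

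First I would check that $F$ is a linear subspace, which is immediate from linearity of the functionals $x^{n}_{\mathbf{F}}$: if $f,g\in F$ and $\alpha,\beta\in\C$, then $\frac{\left(\alpha f+\beta g\right)\left(x^{n}\right)}{\|x^{n}_{\mathbf{F}}\|}=\alpha\frac{f\left(x^{n}\right)}{\|x^{n}_{\mathbf{F}}\|}+\beta\frac{g\left(x^{n}\right)}{\|x^{n}_{\mathbf{F}}\|}\xrightarrow{n\to\8}0$. For closedness, take $f_{k}\to f$ in $\mathbf{F}$ with $f_{k}\in F$; from the estimate noted above, $\frac{\left|\left(f-f_{k}\right)\left(x^{n}\right)\right|}{\|x^{n}_{\mathbf{F}}\|}\le\|f-f_{k}\|$, so
$$\frac{\left|f\left(x^{n}\right)\right|}{\|x^{n}_{\mathbf{F}}\|}\le\|f-f_{k}\|+\frac{\left|f_{k}\left(x^{n}\right)\right|}{\|x^{n}_{\mathbf{F}}\|}.$$
Given $\varepsilon>0$, choosing $k$ with $\|f-f_{k}\|<\frac{\varepsilon}{2}$ and then, using $f_{k}\in F$, an $N$ with $\frac{\left|f_{k}\left(x^{n}\right)\right|}{\|x^{n}_{\mathbf{F}}\|}<\frac{\varepsilon}{2}$ for all $n\ge N$ yields $\frac{\left|f\left(x^{n}\right)\right|}{\|x^{n}_{\mathbf{F}}\|}<\varepsilon$ for $n\ge N$; hence $f\in F$ and $F$ is closed.

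For the last assertion, let $\omega$ be a multiplier of $\mathbf{F}$ with $\omega\left(x^{n}\right)\xrightarrow{n\to\8}0$ and let $f\in\mathbf{F}$. Then $M_{\omega}f=\omega f\in\mathbf{F}$, and
$$\frac{\left|\left(M_{\omega}f\right)\left(x^{n}\right)\right|}{\|x^{n}_{\mathbf{F}}\|}=\frac{\left|\omega\left(x^{n}\right)\right|\left|f\left(x^{n}\right)\right|}{\|x^{n}_{\mathbf{F}}\|}\le\left|\omega\left(x^{n}\right)\right|\|f\|\xrightarrow{n\to\8}0,$$
so $M_{\omega}f\in F$; since $f$ was arbitrary, $M_{\omega}\mathbf{F}\subset F$. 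I do not anticipate any genuine obstacle: the argument is a routine application of the point-evaluation bound, and the only subtlety worth flagging is that $1$-independence is exactly what is needed to divide by $\|x^{n}_{\mathbf{F}}\|$, while $M_{\omega}f$ remains in $\mathbf{F}$ precisely because $\omega$ is assumed to be a multiplier.
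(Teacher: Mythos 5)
Your proof is correct and follows essentially the same route as the paper: both rest on the bound $\left|f\left(x^{n}\right)\right|\le\|x^{n}_{\mathbf{F}}\|\|f\|$, with closedness obtained by the standard $\varepsilon$-approximation (the paper phrases it via $\limsup$, which is the same argument), and the second assertion proved by the identical estimate. Your explicit remark that $1$-independence guarantees $\|x^{n}_{\mathbf{F}}\|>0$ is a small but welcome clarification the paper leaves implicit.
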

\begin{proof}
For every $f\in \overline{F}$ and $\varepsilon>0$ there is $g\in F$ such that $\|f-g\|<\varepsilon$. Then $$\limsup\limits_{n\to\8}\frac{\left|f\left(x^{n}\right)\right|}{\|x^{n}_{\mathbf{F}}\|}\le \limsup\limits_{n\to\8}\frac{\left|g\left(x^{n}\right)\right|}{\|x^{n}_{\mathbf{F}}\|}+\limsup\limits_{n\to\8}\frac{\left|f\left(x^{n}\right)-g\left(x^{n}\right)\right|}{\|x^{n}_{\mathbf{F}}\|}\le 0+\varepsilon.$$ As $\varepsilon$ was chosen arbitrarily we conclude that $\lim\limits_{n\to\8}\frac{f\left(x^{n}\right)}{\|x^{n}_{\mathbf{F}}\|}=0$, and so $f\in F$.

To prove the second statement, take $f\in \mathbf{F}$. Then $\frac{\left|f\left(x^{n}\right)\right|}{\|x^{n}_{\mathbf{F}}\|}\le \|f\|$, for every $n\in\N$, from where $\lim\limits_{n\to\8}\frac{\left[M_{\omega}f\right]\left(x^{n}\right)}{\|x^{n}_{\mathbf{F}}\|}=\lim\limits_{n\to\8}\frac{\omega\left(x^{n}\right)f\left(x^{n}\right)}{\|x^{n}_{\mathbf{F}}\|}=0$.
\end{proof}

\begin{proposition}
Assume that $X$ is Tychonoff and $u:X\to\left(0,+\8\right)$ is upper semi-continuous. A multiplication operator is uniformly mean ergodic on $\Co^{\8}_{u}$ if and only if it is mean ergodic.
\end{proposition}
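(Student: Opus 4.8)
The easy direction is trivial, so I would concentrate on proving that mean ergodicity of a multiplication operator $M_{\omega}$ on $\mathbf{F}=\Co^{\8}_{u}$ forces uniform mean ergodicity. First note that $\Co^{\8}_{u}$ is $1$-independent (it contains positive continuous minorants of $1/u$, so no point evaluation vanishes) and that its norm $\|f\|=\|uf\|_{\8}$ is quasi-monotone with constant $1$. Hence, if $M_{\omega}$ is mean ergodic, Proposition \ref{hc} makes $\omega$ continuous, and Theorem \ref{mem}(i) yields $\|\omega\|_{\8}\le1$, that $U:=\omega^{-1}(1)$ is clopen, and (since $M_{1-\omega}$ kills $\mathbf{F}_{U}$) that $\overline{M_{1-\omega}\mathbf{F}}=\mathbf{F}_{X\backslash U}$. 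By Proposition \ref{un} it will then be enough to establish the single estimate $\beta:=\inf_{x\in X\backslash U}|1-\omega(x)|>0$, for this gives uniform mean ergodicity directly.

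To prove $\beta>0$ I would argue by contradiction. Assume $\beta=0$, pick $x^{n}\in X\backslash U$, and after passing to a subsequence arrange that $\rho_{n}:=|1-\omega(x^{n})|$ strictly decreases to $0$. Since $M_{1-\omega}=Id_{\mathbf{F}}-M_{\omega}$ is bounded, $1-\omega$ is a continuous multiplier of $\mathbf{F}$ with $(1-\omega)(x^{n})\to0$. As $\|x^{n}_{\mathbf{F}}\|=\tfrac1{u(x^{n})}$, the Lemma preceding this Proposition shows that $F:=\{f\in\mathbf{F}:\ u(x^{n})f(x^{n})\to0\}$ is a closed subspace of $\mathbf{F}$ and that $M_{1-\omega}\mathbf{F}\subset F$, whence $\mathbf{F}_{X\backslash U}=\overline{M_{1-\omega}\mathbf{F}}\subset F$. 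So a contradiction is reached as soon as one produces $f\in\mathbf{F}_{X\backslash U}$ with $u(x^{n})f(x^{n})\not\to0$.

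The crux is to separate the $x^{n}$ by a pairwise disjoint, locally finite family of open sets: a merely Tychonoff space need not supply such a family for an arbitrary countable closed discrete set, but continuity of $\omega$ makes one available here. Choose radii $r_{n}$ with $\rho_{n+1}<r_{n}<\rho_{n}$ and let $E_{n}:=\{z:\ r_{n}<|z-1|<r_{n-1}\}$ for $n\ge2$ and $E_{1}:=\{z:\ |z-1|>r_{1}\}$; these annuli are pairwise disjoint, avoid $1$, and only finitely many $\overline{E_{n}}$ meet any set $\{|z-1|\ge d\}$ with $d>0$. Then $G_{n}:=\omega^{-1}(E_{n})$ are pairwise disjoint open subsets of $X\backslash U$ with $x^{n}\in G_{n}$, and $\{\omega^{-1}(\overline{E_{n}})\}_{n}$ is locally finite in $X$ (pulling back the previous remark off $U$, while the open set $U$ meets none of these preimages). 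Intersecting $G_{n}$ with the open set $\{x:\ u(x)<2u(x^{n})\}$ and using complete regularity, pick continuous $g_{n}:X\to[0,1]$ with $g_{n}(x^{n})=1$ and $\{g_{n}\ne0\}$ inside that intersection. Then $f:=\sum_{n}\tfrac1{2u(x^{n})}g_{n}$ is a locally finite sum of continuous functions, hence continuous; disjointness of the $G_{n}$ together with the weight bound give $0\le uf<1$ and $\{f\ne0\}\subset X\backslash U$, so $f\in\mathbf{F}_{X\backslash U}$; yet $u(x^{m})f(x^{m})=\tfrac12$ for all $m$, so $f\notin F$.

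The only step requiring genuine thought is this construction, and in particular the idea that the separating neighbourhoods be taken as $\omega$-preimages of annuli shrinking to $1$, which delivers pairwise disjointness and local finiteness simultaneously; merely knowing $\Co^{\8}_{u}$ is $1$-independent gives one witnessing function per point, which is not enough to defeat the containment $\mathbf{F}_{X\backslash U}\subset F$. The remaining points — quasi-monotonicity of $\|\cdot\|$, that $1-\omega$ is a multiplier, that the locally finite sum lies in $\Co^{\8}_{u}$ — are routine, after which Proposition \ref{un} finishes the proof.
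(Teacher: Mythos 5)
Your argument is correct and its skeleton coincides with the paper's: reduce, via the quasi-monotonicity of $\|\cdot\|_{u}^{\8}$, to proving $\inf_{x\in X\backslash\omega^{-1}\left(1\right)}\left|1-\omega\left(x\right)\right|>0$ (you close with Proposition \ref{un}, the paper with Remark \ref{mem1} --- immaterial); assume the infimum is $0$, extract $x^{n}$ with $\left|1-\omega\left(x^{n}\right)\right|$ strictly decreasing to $0$, invoke the preceding lemma to get $\overline{M_{1-\omega}\mathbf{F}}=\mathbf{F}_{X\backslash\omega^{-1}\left(1\right)}\subset F$, and contradict this with a disjointly supported sum of bumps of height comparable to $1/u\left(x^{n}\right)$. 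The one step where you genuinely diverge is the construction of the separating neighbourhoods. The paper notes that $\left\{x^{n}\right\}$ is discrete and builds, inductively and via complete regularity, sets $V_{n}\ni x^{n}$ with pairwise disjoint closures, then produces each bump from the lower semicontinuity of $\frac{1}{u}\1_{V_{n}}$. You instead take $G_{n}=\omega^{-1}\left(E_{n}\right)$ for disjoint annuli $E_{n}$ shrinking to $1$, and truncate by the open set $\left\{u<2u\left(x^{n}\right)\right\}$ before applying a plain Urysohn bump. Your variant buys local finiteness of $\left\{\omega^{-1}\left(\overline{E_{n}}\right)\right\}$ outright, so continuity of the infinite sum is immediate; pairwise disjointness of the $\overline{V_{n}}$ alone, which is all the paper's induction visibly delivers in a general Tychonoff space, leaves continuity of $\sum f_{n}$ at accumulation points of $\bigcup V_{n}$ needing an extra word, so your route is arguably tighter at that point. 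The price is generality: the paper's first paragraph is a statement about arbitrary discrete sequences in $X$, independent of $\omega$, which is the form one would want for the analogous question about $\Ho_{u}^{\8}$ raised at the end of the section, whereas your annuli exist only because the sequence was manufactured from $\omega$ itself.
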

\begin{proof}
First, let us show that if in the statement of the lemma $\left\{x^{n}\right\}_{n\in\N}$ is a discrete subset of $X$ (i.e. discrete in the induced topology), and $\mathbf{F}=\Co^{\8}_{u}$, then the $F\ne \Co^{\8}_{u}$.

We will construct a sequence $\left\{V_{n}\right\}_{n\in\N}$ of open sets, such that $x^{n}\in V_{n}$, for every $n\in\N$, and $\overline{V_{j}}\cap\overline{V_{k}}=\varnothing$, for distinct $j,k\in\N$. Assume that $V_{1},...,V_{n}$ are chosen (if $n=0$ nothing is chosen yet). Since $\left\{x^{n}\right\}_{n\in\N}$ is discrete, there is an open set $U_{n+1}$ such that $U_{n+1}\cap \left\{x^{n}\right\}_{n\in\N}=\left\{x^{n+1}\right\}$. Since $x^{n+1}$ belongs to $U_{n+1}$, but none of $\overline{V_{1}},...,\overline{V_{n}}$, it follows that $W_{n+1}=U_{n+1}\backslash \bigcup\limits_{k=1}^{n}\overline{V_{k}}$ is an open neighborhood of $x^{n+1}$. Hence, choose $V_{n+1}$ to be an open neighborhood of $x^{n+1}$ such that $\overline{V_{n+1}}\subset W_{n+1}$.\medskip

Let $w_{n}:X\to\left[0,+\8\right]$ be defined as $\frac{1}{u}$ on $V_{n}$ and as $0$ on $X\backslash V_{n}$. Since $u$ is upper semi-continuous and non-vanishing it follows that $w_{n}$ is lower semi-continuous on $X$. Hence, there is $f_{n}\in \Co_{\8}\left(X\right)$ such that $\left|f_{n}\right|\le w_{n}$ and $f_{n}\left(x^{n}\right)>\frac{1}{2} w_{n}\left(x^{n}\right)$. As $f_{n}$ vanishes outside of  $V_{n}$, and $\left\{\overline{V_{n}}\right\}_{n\in\N}$ are disjoint, the sum $f=\sum\limits_{n\in\N}f_{n}$ is well-defined and continuous. Moreover, $\left|f\right|\le \frac{1}{u}$ and $f\left(x^{n}\right)=f_{n}\left(x^{n}\right)>\frac{1}{2 u\left(x^{n}\right)}$. Hence, $\limsup\limits_{n\to\8}\frac{\left|f\left(x^{n}\right)\right|}{\|x^{n}_{\Co^{\8}_{u}}\|}=\limsup\limits_{n\to\8}\left|f\left(x^{n}\right)u\left(x^{n}\right)\right|\ge\frac{1}{2}$, and so $f\in\Co^{\8}_{u}\backslash F$.\medskip

Assume that $M_{\omega}$ is mean ergodic. Replacing $X$ with a clopen set $X\backslash\omega^{-1}\left(1\right)$ if needed, without loss of generality $\upsilon=1-\omega$ does not vanish. Assume that $\inf\limits_{x\in X}\left|\upsilon\left(x\right)\right|=0$ and construct the sequence $\left\{x^{n}\right\}_{n\in\N}\subset X$ as follows. Fix arbitrary $x^{1}$, and if $x^{1},...,x^{n}$ are already constructed, choose $x^{n+1}$ to be such that $\left|\upsilon\left(x^{n+1}\right)\right|<\frac{1}{2}\left|\upsilon\left(x^{n}\right)\right|$. It is easy to see that $\left\{x^{n}\right\}_{n\in\N}$ is a discrete subset. Hence, $M_{\upsilon}\Co^{\8}_{u}\subset\left\{f\in \Co^{\8}_{u}, \lim\limits_{n\to\8}f\left(x^{n}\right)u\left(x^{n}\right)=0\right\}$, which is a closed proper subspace of $\Co^{\8}_{u}$.

Thus, $M_{1-\omega}$ does not have a dense range, which contradicts mean ergodicity of $M_{\omega}$, according to Theorem \ref{mem}. Therefore, $\inf\limits_{x\in X}\left|1-\omega\left(x\right)\right|>0$, and so $M_{1-\omega}$ is a linear homeomorphism, from where $M_{\omega}$ is uniformly mean ergodic, according to Remark \ref{mem1}.
\end{proof}

In order to recover the corresponding result from \cite{br} the following additional piece of information is required.

\begin{question}
If $X$ is a domain in $\C^{n}$, which upper semi-continuous $u:X\to\left(0,+\8\right)$ have the property that for every discrete sequence $\left\{x^{n}\right\}_{n\in\N}\subset X$ there is $f\in \Ho_{u}^{\8}$ such that $\limsup\limits_{n\to\8}\frac{\left|f\left(x^{n}\right)\right|}{\|x^{n}_{\Ho_{u}^{\8}}\|}>0$, or more specifically $\limsup\limits_{n\to\8}\left|f\left(x^{n}\right)\right|u\left(x^{n}\right)>0$?
\end{question}

Note that the required condition is satisfied if every discrete sequence contains an interpolating sequence for $\Ho_{u}^{\8}$. Some characterizations of such sequences see e.g. in \cite{dl}.

\begin{bibsection}
\begin{biblist}

\bib{bbt}{article}{
   author={Bierstedt, Klaus D.},
   author={Bonet, Jos{\'e}},
   author={Taskinen, Jari},
   title={Associated weights and spaces of holomorphic functions},
   journal={Studia Math.},
   volume={127},
   date={1998},
   number={2},
   pages={137--168},
}

\bib{erz}{article}{
   author={Bilokopytov, Eugene},
   title={Continuity and Holomorphicity of Symbols of Weighted Composition
   Operators},
   journal={Complex Anal. Oper. Theory},
   volume={13},
   date={2019},
   number={3},
   pages={1441--1464},
}

\bib{erz2}{article}{
   author={Bilokopytov, Eugene},
   title={Which multiplication operators are surjective isometries?},
   journal={J. Math. Anal. Appl.},
   volume={480},
   date={2019},
   number={1},
}

\bib{bjr}{article}{
   author={Bonet, Jos\'{e}},
   author={Jord\'{a}, Enrique},
   author={Rodr\'{\i}guez, Alberto},
   title={Mean ergodic multiplication operators on weighted spaces of
   continuous functions},
   journal={Mediterr. J. Math.},
   volume={15},
   date={2018},
   number={3},
   pages={Art. 108, 11},
}

\bib{br}{article}{
   author={Bonet, Jos\'{e}},
   author={Ricker, Werner J.},
   title={Mean ergodicity of multiplication operators in weighted spaces of
   holomorphic functions},
   journal={Arch. Math. (Basel)},
   volume={92},
   date={2009},
   number={5},
   pages={428--437},
}

\bib{bourb}{book}{
   author={Bourbaki, Nicolas},
   title={General topology. Chapters 5--10},
   series={Elements of Mathematics (Berlin)},
   note={Translated from the French;
   Reprint of the 1989 English translation},
   publisher={Springer-Verlag, Berlin},
   date={1998},
   pages={iv+363},
}

\bib{bourbaki}{book}{
   author={Bourbaki, Nicolas},
   title={Topological vector spaces. Chapters 1--5},
   series={Elements of Mathematics (Berlin)},
   note={Translated from the French by H. G. Eggleston and S. Madan},
   publisher={Springer-Verlag, Berlin},
   date={1987},
   pages={viii+364},
}

\bib{bor}{article}{
   author={Boyd, Christopher},
   author={Rueda, Pilar},
   title={The biduality problem and M-ideals in weighted spaces of
   holomorphic functions},
   journal={J. Convex Anal.},
   volume={18},
   date={2011},
   number={4},
   pages={1065--1074},
}

\bib{dl}{article}{
   author={Doma\'{n}ski, Pawe\l },
   author={Lindstr\"{o}m, Mikael},
   title={Sets of interpolation and sampling for weighted Banach spaces of
   holomorphic functions},
   journal={Ann. Polon. Math.},
   volume={79},
   date={2002},
   number={3},
   pages={233--264},
   issn={0066-2216},
   review={\MR{1957801}},
   doi={10.4064/ap79-3-3},
}

\bib{ds}{book}{
   author={Dunford, Nelson},
   author={Schwartz, Jacob T.},
   title={Linear Operators. I. General Theory},
   series={With the assistance of W. G. Bade and R. G. Bartle. Pure and
   Applied Mathematics, Vol. 7},
   publisher={Interscience Publishers, Inc., New York; Interscience
   Publishers, Ltd., London},
   date={1958},
   pages={xiv+858},
}

\bib{fhhmz}{book}{
   author={Fabian, Mari\'an},
   author={Habala, Petr},
   author={H\'ajek, Petr},
   author={Montesinos, Vicente},
   author={Zizler, V\'aclav},
   title={Banach space theory},
   series={CMS Books in Mathematics/Ouvrages de Math\'ematiques de la SMC},
   note={The basis for linear and nonlinear analysis},
   publisher={Springer, New York},
   date={2011},
   pages={xiv+820},
}

\bib{floret}{book}{
   author={Floret, Klaus},
   title={Weakly compact sets},
   series={Lecture Notes in Mathematics},
   volume={801},
   note={Lectures held at S.U.N.Y., Buffalo, in Spring 1978},
   publisher={Springer, Berlin},
   date={1980},
   pages={vii+123},
}

\bib{ggm}{article}{
   author={Galanopoulos, Petros},
   author={Girela, Daniel},
   author={Mart\'{\i}n, Mar\'{\i}a J.},
   title={Besov spaces, multipliers and univalent functions},
   journal={Complex Anal. Oper. Theory},
   volume={7},
   date={2013},
   number={4},
   pages={1081--1116},
}

\bib{gamelin}{book}{
   author={Gamelin, Theodore W.},
   title={Uniform algebras},
   publisher={Prentice-Hall, Inc., Englewood Cliffs, N. J.},
   date={1969},
   pages={xiii+257},
}

\bib{koosis}{book}{
   author={Koosis, Paul},
   title={Introduction to $H_p$ spaces},
   series={Cambridge Tracts in Mathematics},
   volume={115},
   edition={2},
   note={With two appendices by V. P. Havin [Viktor Petrovich Khavin]},
   publisher={Cambridge University Press, Cambridge},
   date={1998},
   pages={xiv+289},
}

\bib{krengel}{book}{
   author={Krengel, Ulrich},
   title={Ergodic theorems},
   series={De Gruyter Studies in Mathematics},
   volume={6},
   note={With a supplement by Antoine Brunel},
   publisher={Walter de Gruyter \& Co., Berlin},
   date={1985},
   pages={viii+357},
}

\bib{romanov}{article}{
   author={Romanov, A. V.},
   title={Wold decomposition in Banach spaces},
   language={Russian, with Russian summary},
   journal={Mat. Zametki},
   volume={82},
   date={2007},
   number={6},
   pages={894--904},
   translation={
      journal={Math. Notes},
      volume={82},
      date={2007},
      number={5-6},
      pages={806--815},
   },
}

\bib{scheidemann}{book}{
   author={Scheidemann, Volker},
   title={Introduction to complex analysis in several variables},
   publisher={Birkh\"auser Verlag, Basel},
   date={2005},
   pages={viii+171},
}

\bib{vukotic}{article}{
   author={Vukoti\'{c}, Dragan},
   title={Pointwise multiplication operators between Bergman spaces on
   simply connected domains},
   journal={Indiana Univ. Math. J.},
   volume={48},
   date={1999},
   number={3},
   pages={793--803},
}

\bib{weaver}{book}{
   author={Weaver, Nik},
   title={Lipschitz algebras. Second edition},
   publisher={World Scientific Publishing Co. Pte. Ltd., Hackensack, NJ},
   date={2018},
   pages={xiv+458},
}

\end{biblist}
\end{bibsection}

\end{document}